\numberwithin{equation}{section}
\renewcommand\vec{\bm}
\newcommand{\n}[1]{\|{#1}\|}
\newtheorem{theorem}{Theorem}[section]
\newtheorem{lemma}[theorem]{Lemma}
\newtheorem{Proposition}[theorem]{Proposition}
\newtheorem{Conjecture}[theorem]{Conjecture}
\DeclarePairedDelimiter{\ceil}{\lceil}{\rceil}
\title[An Elekes--R\'{o}nyai theorem for sets with few products]{An Elekes--R\'{o}nyai theorem for sets with few products}
\author[Akshat Mudgal]{Akshat Mudgal}
\address{Mathematical Institute, University of Oxford, Oxford OX2 6GG, UK}
\email{mudgal@maths.ox.ac.uk}
\subjclass[2020]{11B30, 11B13} 
\keywords{Elekes--R\'{o}nyai theorem, Sum-product estimates, Subspace theorem}
\renewcommand\vec{\bm}
\begin{document}

\begin{abstract}
Given $d,n \in \mathbb{N}$, we write a polynomial $F \in \mathbb{C}[x_1,\dots,x_n]$ to be degenerate if there exist $P\in \mathbb{C}[y_1, \dots, y_{n-1}]$ and $m_j = x_1^{v_{j,1}}\dots x_n^{v_{j,n}}$ with $v_{j,1}, \dots, v_{j,n} \in \mathbb{Q}$, for every $1 \leq j \leq n-1$, such that $F = P(m_1, \dots, m_{n-1})$. Our main result shows that whenever $F$ is non-degenerate, then for every finite set $A\subseteq \mathbb{C}$ such that $|A\cdot A| \leq K|A|$, one has
\[ |F(A, \dots, A)| \gg_{d,n} |A|^n 2^{-O_{d,n}((\log 2K)^{3 + o(1)})}. \]
This is sharp up to a factor of $O_{d,n,K}(1)$ since we have the upper bound $|F(A,\dots,A)| \leq |A|^n$ and the fact that for every degenerate $F$ and finite set $A \subseteq \mathbb{C}$ with $|A\cdot A| \leq K|A|$, one has
\[ |F(A,\dots,A)| \ll K^{O_F(1)}|A|^{n-1}.\]
Our techniques rely on a variety of combinatorial and linear algebraic arguments combined with Freiman type inverse theorems and Schmidt's subspace theorem. 
\end{abstract}

\maketitle

\section{Introduction}

This paper concerns the behaviour of polynomials over grids in complex spaces. In particular, given a positive integer $n$, a non-constant polynomial $F \in \mathbb{C}[x_1, \dots, x_n]$ and a finite set $A \subseteq \mathbb{C}$, we are interested in studying properties of the set
\[ F(A, \dots, A) = \{ F(a_1, \dots, a_n) : a_1, \dots, a_n \in A \} . \]
Writing $d$ to be the total degree of $F$, one has the estimate
\begin{equation} \label{es4}
 |A| \ll_{d} |F(A, \dots, A)| \leq  |A|^n,
 \end{equation}
where the upper bound is trivial and the lower bound follows from a straightforward application of the well-known Schwarz-Zippel lemma. A natural question then is to analyse the type of conditions that $F$ must satisfy so as to ensure that 
\[ |F(A, \dots, A)| \gg_{d,n} |A|^{1 + c} \]
for every finite subset $A \subseteq \mathbb{C}$, where $c = c(d,n) >0$ is some constant. Such a line of inquiry was initiated by Elekes--R\'{o}nyai \cite{ER2000} for polynomials $F \in \mathbb{R}[x_1, x_2]$ and sets $A \subseteq \mathbb{R}$, and their work has since been greatly generalised and improved by many authors \cite{BB2021, BT2012, JRT2022, RSS2016, RSZ2016, RS2020}, in part due to its connections to topics in combinatorial geometry  \cite{RSS2016} and model theory  \cite{BB2021}.
\par

In our setting, the above progress has culminated to produce the following result which can be deduced from combining the work of \cite{SZ2022} and \cite{RS2020}. Given $n \geq 2$ and $F \in \mathbb{C}[x_1, \dots, x_n]$ with $\deg F = d$ such that $F$ depends non-trivially in each of $x_1, \dots, x_n$, we either have
\[ |F(A, \dots, A)| \gg_{d,n} |A|^{3/2} \]
for every finite set $A \subseteq \mathbb{R}$, or $F$ is of the form 
\begin{equation} \label{grp}
F(x_1, \dots, x_n) = h(p_1(x_1) + \dots + p_n(x_n)) \ \ \text{or} \ \ F(x_1, \dots, x_n) = h(p_1(x_1)  \dots  p_n(x_n)) 
\end{equation}
for univariate polynomials $h(x), p_1(x), \dots, p_n(x)$. While the above lower bound is far from the upper bound presented in $\eqref{es4}$, there are natural limitations to the kind of quantitative exponents that one can obtain for such results. For example, consider the polynomial $F(x_1, \dots, x_n) = x_1 x_2 + x_3 + \dots + x_n$ and the set $A = \{1, \dots, N\}$. Here, one can see that $F$ does not satisfy either of the forms presented in $\eqref{grp}$ and that 
$|F(A, \dots, A)| \ll_{n} |A|^2.$
\par

Hence, one may ask whether there exists some large family of sets $A$, such that upon restricting our analysis to such sets, one may obtain lower bounds for $|F(A, \dots, A)|$ that are closer to the upper bound mentioned in $\eqref{es4}$. A natural candidate of this type arises in connection to the sum-product conjecture. Thus, given $n \in \mathbb{N}$ and a finite set $A \subseteq \mathbb{C}$, we define the $n$-fold sumset $nA$ and the $n$-fold product set $A^{(n)}$ as 
\[ nA =   \{ a_1 + \dots + a_n : a_1, \dots, a_n \in A\} \ \ \text{and} \ \ A^{(n)}  = \{a_1 \dots a_n : a_1, \dots, a_n \in A\}.\]
The sum-product conjecture, as proposed by Erd\H{o}s and Szemer\'{e}di \cite{ES1983}, states that for any $n \in \mathbb{N}$, any $\varepsilon >0$ and any finite set $A \subseteq \mathbb{Z}$, one should have
\[   |nA| + |A^{(n)}| \gg_{n, \varepsilon} |A|^{n - \varepsilon} .     \]
While this problem remains widely open, it has led to a significant body of work which studies, more generally, the incongruence between additive and multiplicative structure. A striking result in this direction was given by Chang \cite{Ch2003}, who showed that whenever $A\subseteq \mathbb{Z}$ has the product set $A\cdot A = \{ a \cdot b : a, b \in A\}$ satisfying $|A\cdot A| \leq K|A|$ for some $K \geq 1$, then
\[|nA| \gg |A|^n / (2n^2)^{nK}. \]
Firstly, note that when $K$ is small, this almost matches the upper bound $|nA| \leq |A|^n$. Moreover, a quantitatively improved version of this was a crucial ingredient in the breakthrough work of Bourgain--Chang \cite{BC2004} on the sum-product conjecture.
\par

Since the above two results, various works have analysed such expander problems where one focuses on sets with small product sets, see \cite{Ch2006, CH2010a, CH2010b, HRNR2022, HRNZ2019, HRNZ2020, Mu2021c, Mu2023, PZ2020, MRSS2019, Po2020}. For instance, Pohoata \cite{Po2020} showed that whenever $F \in \mathbb{R}[x_1,x_2]$ has $\deg F = d$ and is not of the form $F = P(m(x_1, x_2))$, with $P \in \mathbb{R}[y]$ and $m \in \mathbb{R}[x_1,x_2]$ being a monomial, then $|F(A,A)| \gg_{d,K} |A|^2$ for every finite $A \subseteq \mathbb{R}$ with $|A\cdot A| \leq K|A|$. Similarly, Hanson--Roche-Newton--Zhelezov \cite{HRNZ2019} proved that if one sets $F(x_1, \dots, x_n) = (x_1 + u)\dots(x_n + u)$ for any $u \in \mathbb{Q}\setminus \{0\}$, then for every finite $A \subseteq \mathbb{Q}$ with $|A\cdot A| \leq K |A|$, one has $|F(A, \dots, A)| \gg_{n} |A|^n 2^{-O_n(K)}$.
\par

In this paper, we generalise and strengthen the aforementioned results of \cite{Ch2003, Po2020, HRNZ2019} for a much more extensive class of polynomials and for sets $A \subseteq \mathbb{C}$ with few products. Thus, given $\vec{v} = (v_1, \dots, v_n) \in \mathbb{R}^n$ and $\vec{x} = (x_1, \dots, x_n)$, we denote $\vec{x}^{\vec{v}} = x_1^{v_1} \dots x_n^{v_n}$. We write $F \in \mathbb{C}[x_1, \dots, x_n]$ to be \emph{degenerate} if 
\begin{equation} \label{degf}
 F = P(\vec{x}^{\vec{v}_1}, \dots, \vec{x}^{\vec{v}_{n-1}}), \ \text{for some} \ P \in \mathbb{C}[y_1, \dots, y_{n-1}] \   \text{and}   \ \vec{v}_1, \dots, \vec{v}_{n-1} \in \mathbb{Q}^n. 
 \end{equation}
Denoting $F$ to be \emph{non-degenerate} if $F$ is not degenerate, we now state our main result.

\begin{theorem} \label{main}
Given $d,n \in \mathbb{N}$, there exists a constant $C = C(d,n)>0$ such that for every non-degenerate $F \in \mathbb{C}[x_1,\dots,x_n]$ with $\deg F = d$  and for every finite set $A \subseteq \mathbb{C}$ with $|A \cdot A| \leq K|A|$ for some $K \geq 1$, one has
\begin{equation} \label{qs}
|F(A, \dots, A)| \gg_{d,n} \frac{|A|^n}{2^{C ( \log 2K)^{3 +o(1)}}}. 
\end{equation}
\end{theorem}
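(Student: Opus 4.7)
\emph{Overall strategy.} Rather than bound the maximum fibre of $F$ over $A^n$ directly (which already fails for $F = x_1 - x_2$, where the diagonal alone contributes a fibre of size $|A|$), I would argue via the second-moment ``energy''
\[ E(F, A) = \bigl|\{(\vec{a}, \vec{b}) \in A^{2n} : F(\vec{a}) = F(\vec{b})\}\bigr|. \]
Cauchy--Schwarz gives $|F(A, \dots, A)| \geq |A|^{2n}/E(F, A)$, so it suffices to prove $E(F, A) \ll_{d,n} |A|^n \cdot 2^{O_{d,n}((\log 2K)^{3+o(1)})}$; this parallels the classical energy argument in Chang's theorem on the sumset of sets with small multiplicative doubling.

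\emph{Freiman structure.} Expand $F = \sum_{j=1}^N c_j \vec{x}^{\vec{v}_j}$ with $c_j \neq 0$ and distinct $\vec{v}_j \in \mathbb{Z}_{\geq 0}^n$; here $N \leq \binom{d+n}{n} = O_{d,n}(1)$. The non-degeneracy of $F$ is equivalent to $\operatorname{rank}_{\mathbb{Q}}\langle\vec{v}_1, \dots, \vec{v}_N\rangle = n$, since a strictly smaller rank admits a $\mathbb{Q}$-basis of size $n-1$ that rewrites $F$ in the forbidden form~(\ref{degf}). Applying a quasi-polynomial multiplicative Freiman--Ruzsa theorem (of Sanders type) to $A \subseteq \mathbb{C}^*$, I obtain multiplicatively independent $g_1, \dots, g_r \in \mathbb{C}^*$ with $r \ll (\log 2K)^{3+o(1)}$, a finite set $\Omega$ of roots of unity, and a box $\mathcal{B} \subseteq \mathbb{Z}^r$ such that every $a \in A$ factors as $a = \zeta \cdot g_1^{x_1}\cdots g_r^{x_r}$ with $\zeta \in \Omega$ and $(x_1, \dots, x_r) \in \mathcal{B}$. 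Consequently every monomial value $\vec{a}^{\vec{v}_j}$, for $\vec{a} \in A^n$, lies in the finitely generated multiplicative group $G := \langle \Omega, g_1, \dots, g_r\rangle$ of torsion-free rank $r$.

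\emph{Subspace theorem count.} Each pair contributing to $E(F, A)$ produces a solution to the $2N$-term vanishing sum
\[ \sum_{j=1}^N c_j \vec{a}^{\vec{v}_j} - \sum_{j=1}^N c_j \vec{b}^{\vec{v}_j} = 0 \]
with all variables in $G$. Partition solutions into projective classes under the diagonal scaling action of $G$; within each class the ratios $\eta_j := \vec{a}^{\vec{v}_j}/\vec{b}^{\vec{v}_j}$ are determined, so $\vec{c} := \vec{a}/\vec{b}$ satisfies $\vec{c}^{\vec{v}_j} = \eta_j$ for every $j$. The rank condition forces this system to cut out a finite subgroup of $(\mathbb{C}^*)^n$ of order $O_{d,n}(1)$, leaving $O_{d,n}(1)$ choices of $\vec{c}$, each contributing at most $|A|^n$ pairs $(\vec{a}, \vec{b})$. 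The quantitative Evertse--Schlickewei--Schmidt bound for non-degenerate projective solutions of linear equations in $2N$ variables over a rank-$r$ multiplicative group yields $\exp(O_N(r)) = 2^{O_{d,n}((\log 2K)^{3+o(1)})}$ classes; a standard induction on $N$ handles the degenerate classes (those with a vanishing proper sub-sum) without worsening this bound. Multiplying gives the required upper bound on $E(F, A)$.

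\emph{Main obstacles.} The principal delicacy is the recursive handling of degenerate solutions in the subspace theorem: whenever a proper sub-sum of the $2N$ terms vanishes, one must reapply the argument to both the sub-equation and its complement while keeping the $3+o(1)$ exponent of $\log 2K$ intact. A secondary technical point is the multiplicative Freiman step in $\mathbb{C}^*$, whose torsion part (roots of unity) must be separated cleanly, either by a Freiman-isomorphism reduction to a torsion-free model group or by a torsion-aware version of Sanders' theorem. Finally, one must verify that the implicit constants in Evertse's bound depend only on $N = O_{d,n}(1)$ and not on the specific coefficients of $F$.
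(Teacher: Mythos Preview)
Your proposal has a genuine gap: the energy bound you aim for, namely $E(F,A) \ll_{d,n} |A|^n \cdot 2^{O_{d,n}((\log 2K)^{3+o(1)})}$, is in general \emph{false} for non-degenerate $F$, and the paper constructs an explicit counterexample (the polynomial in \eqref{algo}). For $n \geq 5$, pick $f_0,\dots,f_{n-2} \in \mathbb{Z}[x_1,x_2]$ with a common zero $(a,b)$ (say $(2,4)$, with each $f_j$ divisible by $x_1^2 - x_2$), set $F(\vec{x}) = f_0(x_1,x_2) + \sum_{j=3}^{n} x_j f_{j-2}(x_1,x_2)$, and take $A = \{2,4,\dots,2^N\}$, so that $K \leq 2$. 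Then $F(a,b,a_3,\dots,a_n) = 0$ for every $a_3,\dots,a_n \in A$, whence $E(F,A) \geq |A|^{2n-4} \gg |A|^n$. The step that breaks is precisely your ``standard induction on $N$'' for the vanishing-subsum solutions: when the $2N$-term sum decomposes as $F(\vec{a})=0$ and $F(\vec{b})=0$, and then further into the pieces $x_j f_{j-2}(x_1,x_2)=0$, the subspace theorem controls only the \emph{projective} classes within each piece, and each such class contains a full $|A|$-worth of choices for the free variable $a_j$. Multiplying over the pieces reproduces the $|A|^{2n-4}$ lower bound rather than the $|A|^n$ upper bound you need.

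The paper sidesteps this by abandoning the energy altogether. It first pigeonholes (via Sanders' theorem) onto a subset $A_1$ of density $e^{-l}$ lying in a multiplicative group of rank $r \ll (\log 2K)^{3+o(1)}$; note that Sanders' theorem does not place all of $A$ in such a group, only covers $A$ by $e^l$ translates of a progression, so your Freiman step is also slightly off. It then deletes from $A_1^n$ the zero locus of every subsum polynomial $F_{\mathcal{I}'}(\vec{x}) = \sum_{\vec{v}\in\mathcal{I}'} c_{\vec{v}}\vec{x}^{\vec{v}}$ for $\emptyset \neq \mathcal{I}' \subseteq \mathcal{I}_F$; by Schwartz--Zippel this removes only $O_{d,n}(|A_1|^{n-1})$ points, leaving a set $\mathcal{G}$ on which no vanishing subsum can occur. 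For each $m \neq 0$ the subspace theorem then bounds $|\{\vec{a}\in\mathcal{G}:F(\vec{a})=m\}|$ directly by $2^{O_{d,n}(r)}$, with no recursion needed, and one concludes via $|F(A_1,\dots,A_1)| \geq |\mathcal{G}|/\sup_m r_{F,\mathcal{G}}(m)$.
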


Theorem \ref{main} recovers and quantitatively strengthens the main results of \cite{Ch2003, Po2020, HRNZ2019}. In order to see this, we note that if some $F \in \mathbb{R}[x_1,x_2]$ satisfies $F \neq P(m(x_1, x_2))$ for every $P \in \mathbb{R}[y]$ and every monomial $m \in \mathbb{R}[x_1,x_2]$, then $F$ is non-degenerate; such a conclusion may be derived from the proof of Proposition \ref{rez}. Next, it is straightforward to observe that the polynomial $F(\vec{x}) = x_1 + \dots + x_n$ is always non-degenerate. Finally, for any $u \in \mathbb{Q} \setminus \{0\}$, the polynomial $F(\vec{x}) = (x_1 + u)\dots(x_n + u)$ can be seen to be non-degenerate, see,  for instance, Proposition \ref{rez}.  We further note that the quantitative dependence of our main result on $K$ is better than that of these previous results owing to our application of Sanders' strengthening of Freiman's theorem \cite{Sa2013} in our proof of Theorem \ref{main}. Indeed our lower bound in \eqref{qs} decays quasipolynomially in $K$, while the previous results exhibited exponential decay in $K$. The $(\log 2K)^{o(1)}$ factor in the exponent in \eqref{qs} means that for any $\varepsilon>0,$ we have $(\log 2K)^{o(1)} \ll_{\varepsilon} (\log 2K)^{\varepsilon}$, and in fact, one can replace this with $(\log \log 16K)^{O(1)}$ as mentioned at the beginning of \cite[\S12]{Sa2013}.

As before, when $K$ is small, the lower bound in \eqref{qs} matches the upper bound presented in \eqref{es4} up to a multiplicative constant. Moreover, the hypothesis of Theorem \ref{main} seems to be optimal since the requirement that $F$ be non-degenerate is not only sufficient but also necessary. This is recorded as follows.

\begin{Proposition} \label{opt}
Let $A \subseteq \mathbb{C}\setminus \{0\}$ be a finite, non-empty set satisfying $|A \cdot A| \leq K|A|$ for some $K \geq 1$ and let $F \in \mathbb{C}[x_1, \dots, x_n]$ be a degenerate polynomial with degree $d$. Then
\[ |F(A, \dots, A)| \ll K^{O_{F}(1)} |A|^{n-1}.  \]
\end{Proposition}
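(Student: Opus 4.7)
The plan is to exploit the explicit monomial structure forced by the degeneracy hypothesis and then invoke the multiplicative Pl\"unnecke--Ruzsa inequality. Starting from the decomposition $F(\vec{x}) = P(\vec{x}^{\vec{v}_1}, \dots, \vec{x}^{\vec{v}_{n-1}})$ with $\vec{v}_j \in \mathbb{Q}^n$, I would expand $P$ as a sum of monomials to obtain
\[
F(\vec{x}) = \sum_{\vec{k} \in S} c_{\vec{k}} \, \vec{x}^{\vec{u}_{\vec{k}}}, \qquad \vec{u}_{\vec{k}} := \sum_{j=1}^{n-1} k_j \vec{v}_j,
\]
for some finite $S \subseteq \mathbb{Z}_{\geq 0}^{n-1}$ and $c_{\vec{k}} \in \mathbb{C}$. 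Because $F$ is a genuine polynomial, each $\vec{u}_{\vec{k}}$ lies in $\mathbb{Z}_{\geq 0}^n$; moreover, by construction, all of them lie in the $\mathbb{Q}$-subspace $V := \mathrm{span}_{\mathbb{Q}}(\vec{v}_1, \dots, \vec{v}_{n-1})$, whose dimension $r$ satisfies $r \leq n-1$.

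Next, I would pick a $\mathbb{Z}$-basis $\vec{e}_1, \dots, \vec{e}_r$ of the rank-$r$ lattice $L := \mathbb{Z}^n \cap V$; since $V$ is determined by $F$, standard lattice reduction allows us to take $\|\vec{e}_i\|_1 \leq C_F$ for a constant $C_F > 0$ depending only on $F$. Every $\vec{u}_{\vec{k}} \in L$ is then a $\mathbb{Z}$-linear combination $\vec{u}_{\vec{k}} = \sum_i n_{i,\vec{k}} \vec{e}_i$, so for each $\vec{a} \in A^n$ we have $\vec{a}^{\vec{u}_{\vec{k}}} = \prod_{i=1}^{r} (\vec{a}^{\vec{e}_i})^{n_{i,\vec{k}}}$. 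Substituting back shows that $F(\vec{a})$ is completely determined by the tuple $(\vec{a}^{\vec{e}_1}, \dots, \vec{a}^{\vec{e}_r})$, and hence
\[
|F(A, \dots, A)| \leq \prod_{i=1}^{r} \bigl|\{\vec{a}^{\vec{e}_i} : \vec{a} \in A^n\}\bigr|.
\]

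Finally, I would bound each factor using the multiplicative Pl\"unnecke--Ruzsa inequality. Splitting the entries of $\vec{e}_i$ by sign and writing $p_i, q_i$ for the sums of the positive entries and of the absolute values of the negative entries respectively, one has $p_i + q_i = \|\vec{e}_i\|_1 \leq C_F$ and
\[
\{\vec{a}^{\vec{e}_i} : \vec{a} \in A^n\} \subseteq A^{(p_i)} \cdot (A^{-1})^{(q_i)}.
\]
The hypothesis $|A \cdot A| \leq K|A|$ (combined with $0 \notin A$) yields $|A^{(p_i)} \cdot (A^{-1})^{(q_i)}| \leq K^{p_i + q_i} |A| \leq K^{O_F(1)} |A|$ via Pl\"unnecke--Ruzsa. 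Multiplying the $r \leq n-1$ such estimates gives $|F(A, \dots, A)| \leq K^{O_F(1)} |A|^{r} \leq K^{O_F(1)} |A|^{n-1}$, as required. The argument is essentially mechanical; the only slightly delicate point is ensuring that the basis of $L$ can be chosen with entries bounded in terms of $F$, which is a standard consequence of lattice reduction.
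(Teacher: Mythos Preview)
Your argument is correct and follows essentially the same strategy as the paper: use the degeneracy to express $F(\vec{a})$ as a function of at most $n-1$ monomial values in $\vec{a}$, bound $|F(A,\dots,A)|$ by the product of the cardinalities of these monomial-value sets, and control each factor by the multiplicative Pl\"unnecke--Ruzsa inequality. The only difference is cosmetic: the paper works directly with the rational vectors $\vec{v}_1,\dots,\vec{v}_{n-1}$, clearing denominators to get $\vec{a}^{\vec{v}_j}=(a_1^{r_1}\cdots a_n^{r_n})^{1/M}$ and absorbing the $M$-th-root multivaluedness into the implied constant, whereas you first pass to an integer basis $\vec{e}_1,\dots,\vec{e}_r$ of the lattice $\mathbb{Z}^n\cap V$ and thereby avoid fractional exponents altogether. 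Your route is marginally cleaner for this reason; note also that your appeal to ``lattice reduction'' is unnecessary, since any fixed basis of $L$ has entries bounded by some constant, and $L$ (or rather the sublattice spanned by the actual exponents $\vec{u}_{\vec{k}}$ of $F$) depends only on $F$.
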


An instance of this can be illustrated by setting $F(\vec{x}) = x_1 + \dots + x_{n-2} + x_{n-1} x_n$ and $A = \{2,4,\dots, 2^N\}$. Here, note that $F$ is degenerate and $|A \cdot A| \leq 2|A|$, and that 
\[ |F(A,\dots,A)| \leq |A|^{n-2} |A\cdot A| \leq 2 |A|^{n-1}.  \]
\par

It is worth mentioning that while Theorem \ref{main} works in the more general setting of complex numbers, an analogous statement for $F \in \mathbb{R}[x_1,\dots,x_n]$, with the degeneracy statement only allowing for $P \in \mathbb{R}[y_1,\dots,y_{n-1}]$, can be retrieved from our methods mutatis mutandis.  Moreover,  while the property of being non-degenerate depends on the choice of $n$,  this will always be clear from context.

The proof of Theorem \ref{main} relies on a variety of combinatorial and linear algebraic arguments amalgamated with several results from additive combinatorics and number theory,  including two crucial ingredients in the form of 
Freiman's theorem \cite{Sa2013} and Schmidt's subspace theorem \cite{ESS2002}. The latter two types of results seem to have been first combined  to analyse the sum-product problem by Chang \cite{Ch2006}. The novelty of our methods lies in the application of the former techniques which allow for Theorem \ref{main} to hold in such a generality while also significantly simplifying the applications of the subspace theorem.  This also enables us to bypass applications of algebraic results from \cite{St1989},  the latter having been crucial for the work of \cite{Po2020, RSS2016}.
\par

 In order to elaborate one such idea, we remark that unlike most of the preceding results in this direction, the ``energy version" of Theorem \ref{main} does not hold true. Thus, we define 
\[ E_F(A) = |\{ (a_1,\dots, a_{2n}) \in A^{2n} : F(a_1, \dots, a_n) = F(a_{n+1},\dots, a_{2n}) \}|,\]
and note that a straightforward application of the Cauchy-Schwarz inequality gives us
\[ |F(A,\dots,A)| \geq |A|^{2n} E_{F}(A)^{-1}. \]
Consequently, in order to show that $|F(A,\dots,A)| \gg_{d,n,K} |A|^n$ for some finite set $A \subseteq \mathbb{C}$ with $|A \cdot A| \leq K|A|$, it suffices to prove a bound of the shape
\begin{equation} \label{enbd}
E_{F}(A) \ll_{d,n,K} |A|^n .
\end{equation}
Indeed, previous results from \cite{Ch2003, Ch2006, HRNZ2019, HRNZ2020, PZ2020, Mu2023, Po2020} proceeded via this direction and were able to show that \eqref{enbd} holds for sets $A$ with $|A\cdot A| \leq K|A|$, for some subset $A$ of either $\mathbb{Q},\mathbb{R}$ or $\mathbb{C}$, and for a variety of special cases of $F$ and $n$. This can be seen to be in line with the closely related Elekes-Szab\'{o} philosophy \cite{ES2012}, where instead of finding lower bounds for $|F(A, \dots,A)|$, one is interested in finding upper bounds for $|\{ \vec{a} \in A^n : F(\vec{a}) = 0\}|$.
\par

On the other hand, \eqref{enbd} fails to hold in the more general setting in which Theorem \ref{main} may be applied. For instance, given $n \geq 5$, let $f_0, f_1, \dots, f_{n-2} \in \mathbb{Z}[x_1,x_2] \setminus (\mathbb{Z}[x_1] \cup \mathbb{Z}[x_2])$ be polynomials which are not monomials, such that
\[ f_0(a,b) = f_1(a,b) = \dots = f_{n-2}(a,b) = 0, \]
for some $a,b \in \mathbb{Z} \setminus \{0\}$. Setting
\begin{equation}  \label{algo}
F(x_1,\dots, x_n ) = f_0(x_1, x_2) + x_3 f_1(x_1, x_2) + \dots + x_n f_{n-2}(x_1, x_2), 
\end{equation}
one may verify that $F$ is not degenerate, for example, through the medium of Proposition \ref{rez}. Moreover, given any finite set $A \subseteq \mathbb{Z}$ with $a,b \in A$, we see that 
\[ F(a,b, a_3,\dots, a_n) = F(a,b,a_3',\dots,a_n') = 0 \]
for every $a_3, \dots, a_n, a_3',\dots, a_n' \in A$, whereupon, we have that
\[ E_{F}(A) \geq |A|^{2n-4}. \]
This contradicts \eqref{enbd} whenever $n \geq 5$ and $|A|$ is sufficiently large in terms of $d,n,K$. We remark that while polynomials of the form \eqref{grp} and \eqref{degf} can be considered as additive and multiplicative obstructions, our example from \eqref{algo} can be construed as some kind of an algebraic obstruction which allows for the existence of hidden subvarieties. 
\par

In our setting, we are able to circumvent these algebraic obstructions by replacing $E_F(A)$ with a more flexible object which considers solutions to $F(a_1, \dots, a_n) = m$, for any fixed $m \in \mathbb{C}$, with $(a_1, \dots, a_n)$ lying in a dense hypergraph. This idea combines with our methods to provide the following version of inequality \eqref{enbd}.

\begin{theorem} \label{env}
Let $d,n$ be natural numbers, let $A \subseteq \mathbb{C}^{\times}$ be a finite set with $|A\cdot A|\leq K |A|$ for some $K \geq 1$ and let $F \in \mathbb{C}[x_1, \dots, x_{n}]$ be non-degenerate with $\deg F = d$. Then there exists some set $\mathcal{G} \subseteq A^{n}$ with $|A^{n} \setminus \mathcal{G}| \ll_{d,n} |A|^{n-1}$ such that
\begin{equation} \label{mo}
\sup_{m \in \mathbb{C}} | \{ \vec{a} \in \mathcal{G} : F(\vec{a}) = m\}| \ll_{d,n}  2^{O_{d,n}(K)} .
\end{equation} 
In particular, we have that
\[  |\{ (\vec{a},\vec{a}') \in \mathcal{G}\times\mathcal{G} : F(\vec{a}) = F(\vec{a}') \}|  \ll_{d,n} 2^{O_{d,n}(K)} |A|^{n}.  \]  
\end{theorem}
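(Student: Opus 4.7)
The plan is to combine a multiplicative Freiman-type embedding of $A$ with the Evertse--Schlickewei--Schmidt subspace theorem \cite{ESS2002}, applied to the $\Gamma$-unit linear equation that arises when $F$ is expanded into monomials. The role of the exceptional set $A^n\setminus\mathcal{G}$ is precisely to remove the proper-subsum degeneracies which the subspace theorem cannot absorb. From $|A\cdot A|\leq K|A|$ and $A\subseteq\mathbb{C}^{\times}$, a multiplicative application of Freiman's theorem (after passing to the torsion-free quotient of the group generated by $A$) will produce a finitely generated subgroup $\Gamma\leq\mathbb{C}^{\times}$ of rank $r\ll K$ with $A\subseteq\Gamma$; Sanders' quasi-polynomial rank bound is unnecessary here, since we only target an exponential dependence $2^{O_{d,n}(K)}$. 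We then write $F=\sum_{i=1}^{s}c_i m_i(\vec{x})$ as a sum of $s\leq\binom{d+n}{n}$ pairwise distinct monomials with nonzero complex coefficients.

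For each nonempty $I\subseteq[s]$, the polynomial $P_I:=\sum_{i\in I}c_i m_i$ is a nonzero polynomial of degree at most $d$, so the Schwarz--Zippel lemma gives $|\{\vec{a}\in A^n:P_I(\vec{a})=0\}|\leq d|A|^{n-1}$. We therefore set
\[
\mathcal{G}:=A^n\setminus\bigcup_{\emptyset\neq I\subseteq[s]}\{\vec{a}\in A^n:P_I(\vec{a})=0\},
\]
which yields $|A^n\setminus\mathcal{G}|\leq 2^{s}d|A|^{n-1}\ll_{d,n}|A|^{n-1}$, matching the first conclusion of the theorem.

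Now fix $m\in\mathbb{C}$ and any $\vec{a}\in\mathcal{G}$ with $F(\vec{a})=m$. By construction $m=P_{[s]}(\vec{a})\neq 0$, and no proper subsum $P_I(\vec{a})$ vanishes, so the identity $\sum_{i=1}^{s}(c_i/m)u_i=1$ with $u_i:=m_i(\vec{a})\in\Gamma$ is a \emph{non-degenerate} $\Gamma$-unit equation in the sense of Evertse--Schlickewei--Schmidt. Hence \cite{ESS2002} bounds the number of tuples $(u_1,\dots,u_s)\in\Gamma^{s}$ arising this way by $\exp(O_s(r))\leq 2^{O_{d,n}(K)}$. To lift from monomial values back to $\vec{a}$, we invoke the forthcoming Proposition~\ref{rez}, which identifies non-degeneracy of $F$ with the condition that the exponent vectors $\vec{\alpha}_1,\dots,\vec{\alpha}_s$ of the monomials $m_1,\dots,m_s$ span $\mathbb{Q}^n$. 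Selecting indices $i_1,\dots,i_n$ so that $\vec{\alpha}_{i_1},\dots,\vec{\alpha}_{i_n}$ form a $\mathbb{Q}$-basis, the system $m_{i_j}(\vec{a})=u_{i_j}$ admits at most $|\det(\vec{\alpha}_{i_1},\dots,\vec{\alpha}_{i_n})|=O_{d,n}(1)$ solutions $\vec{a}\in(\mathbb{C}^{\times})^n$, so each non-degenerate $(u_1,\dots,u_s)$ lifts to $O_{d,n}(1)$ tuples $\vec{a}$. This delivers \eqref{mo}, and the stated energy bound follows by summing $|\{\vec{a}\in\mathcal{G}:F(\vec{a})=m\}|^2$ over $m$, using $\sum_{m}|\{\vec{a}\in\mathcal{G}:F(\vec{a})=m\}|=|\mathcal{G}|\leq |A|^n$.

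The step I expect to be most delicate is the final lifting: the paper's definition of degeneracy allows rational exponents $\vec{v}_j\in\mathbb{Q}^n$, so translating it into a spanning property of the \emph{integer} exponent vectors of the monomials of $F$ is not automatic and will lean on Proposition~\ref{rez}. A secondary technical point is pinning down the correct rank $r\ll K$ for $\Gamma$ inside $\mathbb{C}^{\times}$ rather than inside a torsion-free lattice, which should be handled in a standard way by passing to the torsion-free quotient of $\langle A\rangle$ and absorbing the resulting roots-of-unity ambiguity into the implicit constants.
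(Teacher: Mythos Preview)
Your outline matches the paper's proof essentially step for step: the construction of $\mathcal{G}$ via Schwarz--Zippel applied to every nonempty subsum, the application of the Evertse--Schlickewei--Schmidt bound to the resulting non-degenerate $\Gamma$-unit equation, and the lifting from monomial values back to $\vec{a}$ via a $\mathbb{Q}$-basis of exponent vectors are exactly what the paper packages as Lemma~\ref{spt} (with your determinant count playing the role of Lemma~\ref{c2}).

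The one step that does not go through as you describe it is the rank bound. Passing to the torsion-free quotient of $\langle A\rangle$ does not straightforwardly yield $r\ll K$: the quotient map can fail to preserve small doubling (Ruzsa's triangle inequality plus Pl\"unnecke only gives doubling $\leq K^{O(1)}$ for the image, not $\leq K$), and the torsion order $N$ --- the number of roots of unity in $\langle A\rangle$ --- is not bounded in terms of $d,n,K$, so it genuinely cannot be ``absorbed into the implicit constants''. The paper instead proves $r\leq 16K$ by an elementary argument (Lemma~\ref{sumset}) that works directly inside $\mathbb{Z}/N\mathbb{Z}\times\mathbb{Z}^r$: one partitions the cyclic factor into four arcs, passes to the arc carrying at least $|A|/4$ points (which is Freiman $2$-isomorphic to a subset of $\mathbb{Z}^{r+1}$), and applies Freiman's lemma (Lemma~\ref{frlem}) there. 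This sidesteps the torsion issue entirely and recovers the linear dependence on $K$ needed for the stated bound $2^{O_{d,n}(K)}$.
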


We note that the dependence on $K$ in the above estimates matches the previous results of \cite{Ch2003, Po2020, HRNZ2019} and is worse than that of Theorem \ref{main}. This is primarily because in the latter, we can pass to a suitably large subset $A'$ of $A$ without losing substantially large factors of $K$, a manoeuvre which does not seem to be possible in the setting of Theorem \ref{env}. Consequently, we forego the application of Sanders' deep results on Freiman's theorem in this setting by exploiting the circle of ideas surrounding Freiman's lemma \cite[Lemma 5.13]{TV2006}, the latter allowing for better quantitative estimates in this regime. In particular, we are required to obtain an upper bound on the rank of multiplicative subgroups generated by finite sets $A \subseteq \mathbb{C}^{\times}$ satisfying $|A\cdot A| \leq K|A|$ and this is done via the means of Lemma \ref{sumset}.

We remark that finding quantitatively optimal results towards Freiman type inverse theorems forms one of the most central problems in additive combinatorics, the key conjecture here being the polynomial Freiman Ruzsa Conjecture, see \cite{Sa2013}. Roughly speaking, this implies that given any finite set $A$ of integers with few pairwise sums, one can find a large subset of $A$ which is contained efficiently in the affine image of a low dimensional convex body; we provide more details concerning this in \S6. Conditional on such conjectures, the methods of our paper can prove Theorem \ref{main} with a polynomial dependence on $K$, see Proposition \ref{chs}. While one motivation towards such improvements arises from the quantitative study of Elekes-R\'{o}nyai problem, a more intimate connection may be found towards the aforementioned sum-product conjecture. In particular, the previously mentioned breakthrough work of Bourgain--Chang \cite{BC2004} implies that for any $k \in \mathbb{N}$, there exists some $n \in \mathbb{N}$ such that for any finite $A \subseteq \mathbb{Q}$, one has
\begin{equation} \label{spc3}
 |nA| + |A^{(n)}|  \gg_{k} |A|^k.
 \end{equation}
While this has been strengthened in \cite{PZ2020} and extended to a more general phenomenon concerning energies of polynomials and products in \cite{Mu2023}, a major open problem in the area is to prove \eqref{spc3} for finite sets $A$ of real numbers. It was shown by Chang \cite{Ch2009} that such problems have close connections to the weak polynomial Freiman Ruzsa Conjecture, see Conjecture \ref{wkpf}, and indeed, using our methods, we are able to  conditionally prove a much more general version of \eqref{spc3}.

\begin{theorem} \label{condn}
    Suppose that Conjecture \ref{wkpf} holds true. Then for every $d,k \in \mathbb{N}$ and $\varepsilon >0$, there exists $n = n(d,k, \varepsilon) \in \mathbb{N}$ such that for every non-degenerate $F \in \mathbb{C}[x_1,\dots,x_{k}]$ with $\deg F = d$  and for every finite set $A \subseteq \mathbb{R}$, one has
\[ |A^{(n)}| + |F(A, \dots, A)| \gg_{d,k} |A|^{k- \varepsilon}.  \]
\end{theorem}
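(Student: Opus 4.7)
The plan is to combine Proposition~\ref{chs} (the conditional polynomial-in-$K$ strengthening of Theorem~\ref{main} that follows from Conjecture~\ref{wkpf}) with an iterative sum-product scheme in the spirit of Bourgain--Chang \cite{BC2004} and Chang \cite{Ch2009}. One may assume $|A^{(n)}| \leq |A|^{k - \varepsilon}$, as otherwise the claimed inequality is immediate.

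The central reduction is to extract a subset $A' \subseteq A$ with $|A'| \geq |A|^{1 - \eta}$ and $|A' \cdot A'| \leq |A'|^{1 + \eta'}$, for parameters $\eta, \eta' > 0$ that can be chosen arbitrarily small in terms of $\varepsilon$ and $k$. Once this is achieved, setting $K = |A'|^{\eta'}$ and applying Proposition~\ref{chs} gives
\[
|F(A, \dots, A)| \;\geq\; |F(A', \dots, A')| \;\gg_{d,k}\; K^{-C(d,k)} |A'|^{k} \;\geq\; |A|^{(1 - \eta)(k - C(d,k)\eta')},
\]
which exceeds $|A|^{k - \varepsilon}$ once $\eta$ and $\eta'$ are taken sufficiently small (depending on $d, k, \varepsilon$).

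The extraction itself proceeds by passing to the additive set $B = \log|A \setminus \{0\}|$, after splitting $A$ by sign (harmless since $A \subseteq \mathbb{R}$); the hypothesis translates to $|nB| \leq |A|^{k - \varepsilon}$, so in particular $|B + B| \leq |A|^{k - \varepsilon}$. A single application of weak PFR places a polynomial fraction of $B$ inside a generalized arithmetic progression, but since the additive doubling of $B$ can be as large as $|A|^{k - \varepsilon - 1}$, one iteration alone does not yield the desired multiplicative doubling $|A|^{\eta'}$ on $A'$. The argument therefore iterates: at each stage one refines the current subset inside its controlling GAP, re-invokes weak PFR on the refined additive set, and reduces the complexity by a controlled factor, with $n = n(d, k, \varepsilon)$ chosen large enough (essentially $n = n_1 \cdots n_t$, one factor per round) that the hypothesis survives each restriction.

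The main obstacle is the bookkeeping of this tower: one must ensure that a polynomial fraction of the set survives each round while the multiplicative doubling strictly decreases by a definite factor, so that after $t = t(k,\varepsilon)$ rounds the doubling of the surviving subset is at most $|A|^{o(1)}$ and Proposition~\ref{chs} can be profitably applied. The novelty over the Bourgain--Chang--Chang program lies in the final step, where multiplicative structure is exploited through Proposition~\ref{chs} to handle arbitrary non-degenerate $F$, rather than through a direct additive sum-product estimate that would only control $|nA|$.
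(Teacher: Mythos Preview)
Your overall architecture is right — assume $|A^{(n)}|$ is small, extract a structured $A' \subseteq A$, and finish with a polynomial-loss expansion bound — but the extraction step is where the proposal diverges from the paper and becomes vague enough to be a genuine gap.

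The paper's proof avoids any iteration. It uses a one-shot telescoping pigeonhole: if $|A^{(2^l)}| \leq |A|^k$, then writing $\prod_{i=0}^{l-1} |A^{(2^{i+1})}|/|A^{(2^i)}| < |A|^k$ forces some $B = A^{(2^i)}$ to satisfy $|B\cdot B| \leq |A|^{k/l}|B|$. One then applies Conjecture~\ref{wkpf} \emph{once} to $B$, obtaining $B' \subseteq B$ generating a multiplicative group of rank $\ll (k/l)\log|A|$. The return to $A$ is via Ruzsa covering (Lemma~\ref{rsco}): since $\lambda\cdot A \subseteq B$ for some $\lambda$, a pigeonhole gives $A' \subseteq A$ with $|A'| \gg |A|^{1-O(k/l)}$ lying in a translate of $B'\cdot(B')^{-1}$, hence in a group of rank $\ll (k/l)\log|A|$. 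Crucially, the final step is Lemma~\ref{spt}, not Proposition~\ref{chs}: one feeds the \emph{rank} bound directly into the subspace-theorem machinery and incurs a loss of $2^{O_{d,k}(\text{rank})} = |A|^{O_{d,k}(k/l)}$, which is at most $|A|^{\varepsilon}$ once $l$ is large; then $n = 2^l$. At no point is small multiplicative doubling of $A'$ itself established or needed.

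Your route instead targets $|A'\cdot A'| \leq |A'|^{1+\eta'}$, to be fed into Proposition~\ref{chs}. That is a strictly stronger conclusion than low rank, and your mechanism for reaching it — ``re-invoke weak PFR on the refined additive set and reduce the complexity'' — is not a well-defined operation here: Conjecture~\ref{wkpf} applied to a set already sitting in a rank-$d$ subgroup need not decrease either $d$ or the doubling, so there is no monovariant for the loop. The Bourgain--Chang tower you invoke relies on arithmetic fibring specific to the rationals; over $\mathbb{R}$, with only Conjecture~\ref{wkpf} in hand, no analogous complexity parameter is available to drive an iteration. The repair is precisely the paper's move: replace the iteration by the telescoping pigeonhole on $|A^{(2^i)}|$ (which already yields doubling $|A|^{k/l}$ on some iterated product set), transfer the resulting \emph{rank} bound back to a subset of $A$ via Ruzsa covering, and finish with Lemma~\ref{spt} rather than Proposition~\ref{chs}.
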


We remark that a version of this result for finite sets $A\subseteq \mathbb{Q}$ and for non-degenerate $F \in \mathbb{Q}[x_1,\dots,x_{k}]$ of the form \eqref{grp} can be deduced unconditionally from our work in \cite{Mu2023}.

We now provide a brief outline of our paper. We begin by utilising \S2 to prove some key properties of degenerate polynomials in the form of Propositions \ref{opt} and \ref {rez}. We employ \S3 to record various results from additive combinatorics and number theory that we will require for our proofs of Theorems \ref{main} and \ref{env}. In \S4, we will present the proof of Lemma \ref{spt}, which delivers an analogue of Theorem \ref{env} for sets $A \subseteq \mathbb{C}^{\times}$ that generate a multiplicative subgroup of small rank. This naturally combines with Freiman type inverse theorems to dispense our main results, which is what we pursue in \S5. Finally, in \S6, we prove Theorem \ref{condn}.

\textbf{Notation.} In this paper, we use Vinogradov notation, that is, we write $X \gg_{z} Y$, or equivalently $Y \ll_{z} X$, to mean $X \geq C_{z} |Y|$ where $C_z$ is some positive constant depending on the parameter $z$. We further write $X = O_{z}(Y)$ to mean $X \ll_{z} Y$. For every natural number $k \geq 2$ and for every non-empty, finite set $Z$, we use $|Z|$ to denote the cardinality of $Z$, we write $Z^k = \{ (z_1, \dots, z_k)   :   z_1, \dots, z_k \in Z\}$ and we use boldface to denote vectors $\vec{z} = (z_1, z_2, \dots, z_k) \in Z^k$. Furthermore, given $\vec{v} \in \mathbb{R}^k$, we denote the monomial $\vec{x}^{\vec{v}} = x_1^{v_1} \dots x_k^{v_k}$, and given positive integer $n$, we define $[n]=\{1,2,\dots,n\}$.  Given a field $\mathbb{F}$,  we write $\mathbb{F}^{\times} = \mathbb{F} \setminus \{0\}$ to be the set of invertible elements of $\mathbb{F}$.  Finally, all of our logarithms are with respect to base $2$.

\textbf{Acknowledgements}. The author is supported by Ben Green's Simons Investigator Grant, ID 376201. The author would like to thank Ben Green for helpful discussions.


\section{Properties of degenerate polynomials}

We begin this section by presenting some notation, and so, we write $E = \mathbb{N} \cup\{0\}$ 
and $\vec{x}^{0} = 1$. Next, given a non-zero polynomial $F \in \mathbb{C}[x_1, \dots, x_n]$, we define $\mathcal{I}_F \subseteq E^n$ to be the unique set which satisfies
\[ F(\vec{x}) = \sum_{\vec{i} \in \mathcal{I}_F} c_{\vec{i}} \vec{x}^{\vec{i}},\]
where $c_{\vec{i}} \in \mathbb{C} \setminus \{0\}$ whenever $i \in \mathcal{I}_F$. We denote $d(F)$ to be the dimension of the linear span of the set $\mathcal{I}_F$ over $\mathbb{Q}$. Note that if $d(F) = 1$ for some $F \in \mathbb{C}[x_1,x_2]$, then $F = P(m(x_1, x_2))$, where $P \in \mathbb{C}[x]$ is a univariate non-constant polynomial and $m(x_1, x_2)$ is a monomial. Similarly if $d(F) = 0$ for any $F \in \mathbb{C}[x_1,\dots,x_n]$, then $F$ is just some constant polynomial. An important ingredient in our proofs of Theorems \ref{main} and \ref{env} would be the idea that for any non-degenerate $F \in \mathbb{C}[x_1, \dots, x_{n}]$, we have $d(F) = n$. This will be encapsulated in the following result.

 \begin{Proposition} \label{rez} 
Let $n \geq 1$ be an integer,  let $F\in \mathbb{C}[x_1, \dots, x_n]$ be a non-zero polynomial. Then $d(F) \leq n-1$ holds if and only if $F$ is degenerate.
 \end{Proposition}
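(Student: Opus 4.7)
The plan is to establish the two directions separately. For the easy direction, assume $F = P(\vec{x}^{\vec{v}_1}, \dots, \vec{x}^{\vec{v}_{n-1}})$ for some $P \in \mathbb{C}[y_1, \dots, y_{n-1}]$ and $\vec{v}_1, \dots, \vec{v}_{n-1} \in \mathbb{Q}^n$. Expanding $P$ as a sum of monomials, every exponent vector of $F$ has the form $\sum_j e_j \vec{v}_j$ with $e_j \in \mathbb{Z}_{\geq 0}$, and so lies in the $\mathbb{Q}$-span of $\vec{v}_1, \dots, \vec{v}_{n-1}$, which has $\mathbb{Q}$-dimension at most $n-1$. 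This gives $d(F) \leq n-1$.

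For the substantive direction, suppose $d(F) = k \leq n-1$, and let $V \subseteq \mathbb{Q}^n$ denote the $k$-dimensional $\mathbb{Q}$-span of $\mathcal{I}_F$. The goal is to produce $\vec{v}_1, \dots, \vec{v}_k \in V$ such that each $\vec{i} \in \mathcal{I}_F$ can be written as $\vec{i} = \sum_{s=1}^{k} n_{\vec{i},s} \vec{v}_s$ with $n_{\vec{i},s} \in \mathbb{Z}_{\geq 0}$. Once such a basis is found, one pads with $\vec{v}_{k+1} = \dots = \vec{v}_{n-1} = \vec{0}$, defines
\[ P(y_1,\dots,y_{n-1}) = \sum_{\vec{i} \in \mathcal{I}_F} c_{\vec{i}} \, y_1^{n_{\vec{i},1}} \cdots y_k^{n_{\vec{i},k}}, \]
and checks directly that $P(\vec{x}^{\vec{v}_1}, \dots, \vec{x}^{\vec{v}_{n-1}}) = F$, so that $F$ is degenerate.

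The heart of the argument is the construction of the basis, and the key observation is that it can be read off from the standard coordinate functionals on $\mathbb{Q}^n$. Since $V \subseteq \mathbb{Q}^n$, the restriction map $(\mathbb{Q}^n)^* \to V^*$ is surjective, so the restrictions $x_1|_V, \dots, x_n|_V$ span $V^*$. Choose indices $j_1, \dots, j_k \in [n]$ so that $x_{j_1}|_V, \dots, x_{j_k}|_V$ form a basis of $V^*$, and let $\vec{v}_1, \dots, \vec{v}_k \in V$ be the corresponding dual basis, characterised by $x_{j_s}(\vec{v}_t) = \delta_{st}$. For any $\vec{i} \in \mathcal{I}_F$, the expansion in this basis reads $\vec{i} = \sum_s i_{j_s} \vec{v}_s$, and since $i_{j_s}$ is just the $j_s$-th coordinate of $\vec{i} \in \mathbb{Z}_{\geq 0}^n$, the coefficients are non-negative integers automatically.

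The main obstacle is conceptual rather than technical. A generic basis of $V$ would not produce non-negative integer coefficients, and a priori it is tempting to invoke more sophisticated convex-geometric machinery, such as verifying that the rational polyhedral cone $V \cap \mathbb{R}_{\geq 0}^n$ is pointed and then extracting a rational basis from the interior of its dual. The point is to recognise that the coordinate functionals lie in this dual cone by construction thanks to $\mathcal{I}_F \subseteq \mathbb{Z}_{\geq 0}^n$, giving a clean essentially one-line realisation of the required basis and bypassing any explicit convex-geometric argument.
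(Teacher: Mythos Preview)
Your proof is correct and is essentially the same argument as the paper's. The paper first enlarges $\mathcal{I}_F$ so that the span $V$ has dimension exactly $n-1$, then (after reordering) observes that $x_1|_V,\dots,x_{n-1}|_V$ are independent and takes $\vec{v}_1,\dots,\vec{v}_{n-1}$ to be their dual basis; you instead work directly with the $k$-dimensional span, select $k$ coordinate functionals $x_{j_1}|_V,\dots,x_{j_k}|_V$ forming a basis of $V^*$, take the dual basis, and pad with zeros. In both cases the key insight is identical: the coefficients in the dual-basis expansion are precisely selected coordinates of $\vec{i}\in\mathcal{I}_F\subseteq E^n$ and hence non-negative integers for free. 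Your dual-space phrasing is slightly cleaner, but there is no substantive difference in strategy.
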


\begin{proof}
Note that if $F$ is degenerate, then $F = P(\vec{x}^{\vec{v}_1}, \dots, \vec{x}^{\vec{v}_{n-1}})$, for some polynomial $P \in \mathbb{C}[y_1, \dots, y_{n-1}]$ and some $\vec{v}_1, \dots, \vec{v}_{n-1} \in \mathbb{Q}^n$. Thus, we have that $\mathcal{I}_F$ lies in the subspace generated by $\vec{v}_1, \dots, \vec{v}_{n-1}$, whence, $d(F) = \dim \mathcal{I}_F \leq n-1$. 
\par

We now prove that whenever $d(F) \leq n-1$, then $F$ is degenerate. Thus, suppose that $d(F) = r$ for some $r \leq n-1$. Our aim is to show that there exist elements $\vec{v}_1, \dots, \vec{v}_{n-1} \in \mathbb{Q}^n$ such that any $\vec{v} \in \mathcal{I}_F$ may be represented as
\begin{equation} \label{cnc2}
     \vec{v} = \alpha_{1, \vec{v}} \vec{v}_1 + \dots +  \alpha_{n-1, \vec{v}} \vec{v}_{n-1}, 
\end{equation}
for some $\alpha_{1, \vec{v}}, \dots, \alpha_{n-1, \vec{v}} \in E$, since this would directly imply that $F = P(\vec{x}^{\vec{v}_1},\dots, \vec{x}^{\vec{v}_{n-1}})$ for some $P \in \mathbb{C}[y_1, \dots, y_{n-1}]$. In this endeavour, we may add some points from $E^n$ to $\mathcal{I}_F$, if necessary, so as to ensure that the subspace $V$ generated by $\mathcal{I}_F$ over $\mathbb{Q}$ has $\dim(V) = n-1$. This implies that, up to some reordering of coordinates, we can find $\beta_1, \dots, \beta_{n-1} \in \mathbb{Q}$ such that for any $\vec{v} = (v_1, \dots, v_n) \in V$, we have
\[ v_n = \sum_{i=1}^{n-1} \beta_i v_i.  \]
Now, for every $1 \leq j \leq n-1$, we fix $\vec{v}_j \in V$ such that $v_i = 0$ for every $i \in [n-1]\setminus \{j\}$ and $v_j = 1$. We now claim that $\eqref{cnc2}$ holds true with this choice of $\vec{v}_1, \dots, \vec{v}_{n-1}$. In order to see this, we define, for every $1 \leq i \leq n$, the map $\pi_i : V \to \mathbb{Q}$ as $\pi_i(\vec{v}) = v_i$ for each $\vec{v} = (v_1, \dots, v_n) \in V$.  With this in hand, we observe that any $\vec{u} = (u_1, \dots, u_n) \in V$ satisfies
\[ \vec{u} = u_1 \vec{v}_1 + \dots + u_{n-1} \vec{v}_{n-1},\]
since 
\[ \pi_i \big( \sum_{i=1}^{n-1} u_i \vec{v}_i \big) = u_i  \ \ \text{for every} \ 1 \leq i \leq n-1  \]
and
\[  \pi_n \big( \sum_{i=1}^{n-1} u_i \vec{v}_i \big)  =  \sum_{i=1}^{n-1} u_i \pi_n ( \vec{v}_i ) = \sum_{i=1}^{n-1} \beta_i u_i = u_n. \]
Moreover, for any $\vec{u} \in \mathcal{I}_F$, we know that $u_i \in E$ for every $1 \leq i \leq n$, which consequently proves $\eqref{cnc2}$.
\end{proof}

We will now present some further definitions, and so, given finite subsets $A,B$ of some abelian group $G$, we denote the sumset $A+B$ and the difference set $A-B$ as
\[ A+B = \{a+b : a \in A, b \in B\} \ \ \text{and}  \ \ A - B = \{ a - b : a \in A, b \in B\} . \]
This further allows us to write,  for every $k \in \mathbb{N}$,  the $k$-fold sumset 
\[ kA = \{a_1 + \dots + a_k : a_1, \dots, a_k \in A\} . \]
 When $A,B$ are finite subsets of some ring $R$, we further define the product set 
\[ A\cdot B = \{a \cdot b : a \in A, b \in B\}, \]
and if all the elements of $B$ are invertible, we write
\[ B^{-1} = \{ b^{-1} : b \in B\} \ \ \text{and} \ \ A/B  = A \cdot B^{-1}  = \{ a \cdot b^{-1} : a \in A, b \in B\} .\]
Moreover,  we define the $k$-fold product set $A^{(k)} = \{ a_1 \dots a_k : a_1, \dots, a_k \in A\}$ for every $k \in \mathbb{N}$.

With this notation in hand, we record a useful result from additive combinatorics known as the Pl\"{u}nnecke-Ruzsa inequality which allows one to bound many-fold sumsets in terms of the two-fold sumset.

\begin{lemma} \label{pr}
Let $A$ be a finite subset of some abelian group $G$ and let $|A+A| \leq K|A|$ for some $K \geq 1$. Then for every $l,m \in \mathbb{N}\cup \{0\}$, we have
\[ |kA - lA| \leq  K^{k+l}|A|.\]
\end{lemma}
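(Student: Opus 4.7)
The plan is to use Petridis's short proof of the Plünnecke-Ruzsa inequality, which is now standard. Choose a non-empty subset $X \subseteq A$ that minimises the ratio $K' := |X + A|/|X|$; since $A$ is itself a candidate, one has $K' \leq K$. The key step is Petridis's lemma, which asserts that for every finite set $Y$,
\[ |X + A + Y| \leq K' |X + Y|. \]
I would prove this by induction on $|Y|$: the case $|Y| = 1$ is just the definition of $K'$. For the inductive step, write $Y = Y' \cup \{y\}$ and decompose $X + A + Y$ as the union of $X + A + Y'$ with the ``new'' slice $(X + A) + y$. The defining minimality of $X$ forces the overlap of these two pieces to be large enough that, combined with the inductive hypothesis $|X + A + Y'| \leq K'|X + Y'|$, one obtains the claimed bound.

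With Petridis's lemma in hand, a trivial induction on $k$ yields
\[ |X + kA| \leq K'^{k} |X| \qquad (k \geq 0), \]
the inductive step being an instance of the lemma applied with $Y = kA$. To convert these estimates into control on $|kA - lA|$, I would invoke Ruzsa's triangle inequality in the form
\[ |X| \cdot |kA - lA| \leq |X + kA| \cdot |X + lA|, \]
which follows from fixing, for each $d \in kA - lA$, a representation $d = u_d - w_d$ with $u_d \in kA$ and $w_d \in lA$, and observing that the map $(x, d) \mapsto (x + u_d, x + w_d)$ embeds $X \times (kA - lA)$ into $(X + kA) \times (X + lA)$. Combining these estimates and using $K' \leq K$ together with $|X| \leq |A|$ gives
\[ |kA - lA| \leq K'^{k + l} |X| \leq K^{k + l} |A|, \]
with the edge cases $k = 0$ or $l = 0$ handled by the convention $0A = \{0\}$.

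The main obstacle is the proof of Petridis's lemma itself: the inductive step requires a precise exploitation of the minimality of $X$ in order to control the overlap between $X + A + Y'$ and $(X + A) + y$, and essentially all of the combinatorial content of the inequality resides here. Everything else is routine sumset manipulation.
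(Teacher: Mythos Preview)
Your proposal is correct: the Petridis argument you outline is the standard modern route to the Pl\"unnecke--Ruzsa inequality, and each of the steps you describe (the minimising-ratio choice of $X$, Petridis's covering lemma by induction on $|Y|$, the iteration to $|X+kA|\leq K'^k|X|$, and the Ruzsa triangle inequality) goes through exactly as you say.

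There is nothing to compare with in the paper itself, however: the paper does not prove this lemma at all. It is simply recorded as a well-known tool from additive combinatorics and then used as a black box (for instance, in the proof of Proposition~\ref{opt}). So your write-up supplies strictly more than the paper does here. As a minor remark, the statement in the paper has a typographical slip (the quantifier reads ``$l,m$'' while the conclusion involves $k,l$); your proof correctly treats the intended variables.
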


We are now ready to present the proof of Proposition \ref{opt}.

\begin{proof}[Proof of Proposition \ref{opt}]
Since $F$ is degenerate, we have that $F = P(\vec{x}^{\vec{v}_1}, \dots, \vec{x}^{\vec{v}_{n-1}})$ for some $P \in \mathbb{C}[y_1, \dots, y_{n-1}]$ and $\vec{v}_1, \dots, \vec{v}_{n-1} \in \mathbb{Q}^n$. For each $1 \leq j \leq n-1$, let 
\[ X_j = \{ a_1^{v_{j,1}} \dots a_n^{v_{j,n}} : a_1, \dots, a_n \in A\}. \]
Thus, we see that
\[ |F(A, \dots, A)| \leq |P(X_1, \dots, X_{n-1})| \leq |X_1| \dots |X_{n-1}|, \]
whence, it suffices to show that $|X_j| \ll K^{O_F(1)} |A|$ for every $1 \leq j \leq n-1$. Fixing some $1 \leq j \leq n-1$, we let $v_{i,j} = p_i/q_i$ for some $p_i \in \mathbb{Z}$ and $q_i \in \mathbb{N}$, for every $1 \leq i \leq n$. Thus, writing $M = q_1 \dots q_n$ and $r_i = q_1 \dots q_{i-1} p_i q_{i+1} \dots q_n$ for every $1 \leq i \leq n$, we see that
\[ a_1^{v_{j,1}} \dots a_n^{v_{j,n}} = \prod_{i=1}^{n} (a_i^{q_1 \dots q_{i-1} p_i q_{i+1} \dots q_n})^{1/(q_1\dots q_n)} = (a_1^{r_1} \dots a_n^{r_n})^{1/M},    \]
for any $a_1, \dots, a_n \in A$. Let $S = \{ s_1, \dots, s_l\}$ and $T = \{t_1, \dots, t_m\}$ be sets such that
\[ S = \{ r_1, \dots, r_n\}\cap [0, \infty)  \ \text{and} \ T =   \{ r_1, \dots, r_n\}\cap (- \infty,0). \]
With this notation in hand, we have that
\begin{align*}
 |X_j| 
 & \ll_{M} | \{ a_1^{r_1} \dots a_n^{r_n} : a_1, \dots, a_n \in A\}| \leq |A^{(s_1 + \dots + s_l)}/ A^{(|t_1| + \dots + |t_m|)}| \\
 & \leq K^{(|s_1| + \dots + |s_l| + |t_1| + \dots + |t_m|)} |A|, 
 \end{align*}
 with the last inequality following from Lemma \ref{pr}. Thus, we have that $|X_j| \ll K^{O_F(1)}|A|$ for every $1 \leq j \leq n-1$, which allows us to conclude the proof of Proposition \ref{opt}.
\end{proof}


\section{Preparatory lemmata}

Our main aim for this section is to record some results from additive combinatorics and number theory that will be important for the proofs of Theorems \ref{main} and \ref{env}. We begin this endeavour by presenting some further notation, and so, given some $\vec{v}\in \mathbb{R}^n$,  we write $\n{\vec{v}}_1 = |v_1| + \dots + |v_n|$.  Similarly,  given a non-zero rational number $p/q$,  with $(p,q) = 1$,  we denote $N(p/q) = \max\{|p|,|q|\}$. We also set $N(0) = 0$.  We can now extend this definition to vectors in $\mathbb{Q}^n$,  and so,  given $\vec{z} = (z_1, \dots,z_n) \in \mathbb{Q}^n$,  we define $N(\vec{z}) = \max_{1 \leq i \leq n} N(z_i)$.  Next,  given $F \in \mathbb{C}[x_1,\dots, x_n]$ and a finite subset $A \subseteq \mathbb{C}$,  we write
\[ V(F,A) = \{ (a_1,\dots, a_n) \in A^n : F(a_1, \dots, a_n) = 0 \} . \]
\par

With this in hand,  we now record the well-known Schwartz–Zippel lemma.

\begin{lemma} \label{scz}
Let $F \in \mathbb{C}[x_1,\dots,x_n]$ be a non-zero polynomial with $\deg(F) = d$ and let $A$ be a finite subset of $\mathbb{C}$.  Then 
\[ |V(F,A)| \leq d |A|^{n-1}. \]
\end{lemma}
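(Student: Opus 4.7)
The plan is to prove Lemma \ref{scz} by induction on the number of variables $n$. The base case $n=1$ is immediate from the fundamental theorem of algebra: a non-zero polynomial in $\mathbb{C}[x_1]$ of degree at most $d$ has at most $d$ roots, so in particular has at most $d$ roots inside the finite set $A$.

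For the inductive step, suppose the result holds for $n-1$. Given a non-zero $F \in \mathbb{C}[x_1,\dots,x_n]$ with $\deg F = d$, I would expand $F$ as a polynomial in the last variable:
\[ F(x_1,\dots,x_n) = \sum_{i=0}^{k} F_i(x_1,\dots,x_{n-1}) x_n^i, \]
where $k$ is the largest index with $F_k \not\equiv 0$. Since $F$ has total degree $d$ and $F_k$ is multiplied by $x_n^k$, we have $\deg F_k \leq d-k$.

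Now partition the tuples $\vec{a} = (a_1,\dots,a_n) \in V(F,A)$ according to whether $F_k(a_1,\dots,a_{n-1}) = 0$ or not. In the first case, the inductive hypothesis applied to the non-zero polynomial $F_k$ yields at most $(d-k)|A|^{n-2}$ choices of $(a_1,\dots,a_{n-1})$, each contributing at most $|A|$ choices of $a_n$, for a total of at most $(d-k)|A|^{n-1}$ tuples. In the second case, for each fixed $(a_1,\dots,a_{n-1})$ with $F_k(a_1,\dots,a_{n-1}) \neq 0$, the specialisation $F(a_1,\dots,a_{n-1}, x_n)$ is a non-zero univariate polynomial of degree exactly $k$, so it has at most $k$ roots in $A$, contributing at most $k|A|^{n-1}$ tuples overall. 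Summing the two contributions gives
\[ (d-k)|A|^{n-1} + k|A|^{n-1} = d|A|^{n-1}, \]
which closes the induction.

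This is a classical argument and I do not anticipate any genuine obstacle; the only point that requires care is the degree bookkeeping (namely $\deg F_k \leq d-k$), which ensures that the two contributions combine cleanly into $d|A|^{n-1}$ rather than something larger.
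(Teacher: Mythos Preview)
Your proof is correct and is the standard inductive proof of the Schwartz--Zippel lemma. The paper does not actually prove this statement: it merely records it as the ``well-known Schwartz--Zippel lemma'' and moves on, so there is no approach in the paper to compare against.
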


As mentioned before, one can utilise this to show that $|F(A,\dots,A)| \geq |A|/d$ for every non-constant $F \in \mathbb{C}[x_1, \dots, x_n]$, since
\begin{align*}
    |A|^n & = \sum_{m \in F(A, \dots, A)} |\{ \vec{a} \in A^n : F(\vec{a}) = m\}|   \\
    &\leq  \sup_{m \in F(A, \dots, A)} |\{ \vec{a} \in A^n : F(\vec{a}) = m\}| \ |F(A,\dots, A)|   \\
    & \leq d|A|^{n-1} |F(A, \dots, A)| .
\end{align*}

We will also require the following quantitative refinement of the subspace theorem of Evertse, Schmidt and Schlikewei \cite{ESS2002} as proved by Amoroso--Viada in \cite{AV2009}. 

\begin{lemma} \label{subs}
Let $l, r$ be natural numbers, let $\Gamma$ be a subgroup of $\mathbb{C}^{\times}$ of finite rank $r$ and let $c_1, \dots, c_{l}$ be non-zero complex numbers. Then the number of solutions of the equation
\[ c_1 z_1 + \dots + c_l z_l = 1\]
with $z_i \in \Gamma$ and no subsum on the left hand side vanishing is at most $(8l)^{4l^4(l + lr + 1)}$.
\end{lemma}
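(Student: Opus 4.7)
The plan is to derive this bound from a quantitative version of Schmidt's subspace theorem. Since $\Gamma$ has finite rank $r$, I would first pass to a finitely generated subgroup of $\mathbb{C}^{\times}$ containing the torsion part together with $r$ multiplicatively independent generators $u_1, \dots, u_r$; adjoining these and the coefficients $c_1, \dots, c_l$ to $\mathbb{Q}$ yields a number field $K$, and by choosing a finite set $S$ of places of $K$ (containing the archimedean places and those at which some $u_j$ has nontrivial valuation) each $z_i$ becomes an $S$-unit. The unknowns $z_i$ are then parametrized by their exponent vectors in the $u_j$ together with a torsion component, which is the setup in which the subspace theorem is classically applied.

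To each solution I would associate the projective point $[c_1 z_1 : \dots : c_l z_l]$ in $\mathbb{P}^{l-1}(K)$. The no-vanishing-subsum hypothesis is precisely the condition that forces non-degeneracy: it prevents the projective image from lying in a coordinate subspace and ensures that the relevant products of linear forms evaluated at the solutions are small in the sense required by the subspace theorem. A quantitative subspace theorem then places these projective points into a bounded union of proper linear subspaces of $\mathbb{P}^{l-1}(K)$, where the number of such subspaces depends only on $l$ and $r$. Within each such subspace the equation $c_1 z_1 + \dots + c_l z_l = 1$ restricts to an $S$-unit equation in strictly fewer variables, and I would induct on $l$, using the no-vanishing-subsum hypothesis to propagate the non-degeneracy and treating the base case $l=1$ trivially since it admits at most one solution.

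The main obstacle is extracting the explicit bound $(8l)^{4l^4(l + lr + 1)}$ with only polynomial dependence on $r$ inside the exponent, rather than the exponential dependence one obtains from a naive counting argument inside each subspace. This is precisely the content of the refinement by Amoroso and Viada: they combine height inequalities for subvarieties of the algebraic torus $\mathbb{G}_m^l$ with a delicate geometry-of-numbers argument to sharpen the packing estimates that govern how many solutions can lie in each exceptional subspace. Since the resulting quantitative gain is exactly what ultimately drives the polynomial-in-$r$ dependence needed for the quasi-polynomial bound on $K$ in Theorem~\ref{main}, I would quote Amoroso and Viada directly rather than attempt to reconstruct this machinery in full.
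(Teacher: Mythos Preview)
The paper does not prove this lemma at all: it is simply quoted as a known result, attributed to Amoroso--Viada \cite{AV2009} as a quantitative refinement of the Evertse--Schlickewei--Schmidt theorem \cite{ESS2002}. Your proposal ultimately lands in the same place, since after sketching the subspace-theorem framework you explicitly say you would quote Amoroso--Viada directly rather than reconstruct the argument. In that sense your approach and the paper's approach coincide.

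That said, the narrative you give around the citation is somewhat misleading. The result being invoked is not really a corollary of the subspace theorem applied over a number field $K$ obtained by adjoining generators and coefficients; indeed, one of the key features of the Evertse--Schlickewei--Schmidt and Amoroso--Viada bounds is that they are \emph{uniform} in the coefficients $c_1,\dots,c_l$ and do not require passing to a number field containing them (the $c_i$ are arbitrary nonzero complex numbers here, and need not be algebraic). The actual argument works in the algebraic torus $\mathbb{G}_m^l$ and uses specialisation together with height machinery, not an $S$-unit reduction over a fixed $K$. So while your final instinct to simply cite \cite{AV2009} is exactly what the paper does, the surrounding sketch would not produce the stated bound if carried out literally.
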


Here, we denote the multiplicative subgroup $\Gamma \subseteq \mathbb{C}^{\times}$ to have rank $r$ if there exists a finitely generated subgroup $\Gamma_0$ of $\Gamma$, again of rank $r$, such that the factor group $\Gamma/\Gamma_0$ is a torsion group. Furthermore, note that if we are in the setting of Lemma \ref{subs} but instead we are counting solutions to the equation
\[ c_1 z_1 + \dots + c_l z_l = m \]
for some fixed $m \in \mathbb{C}\setminus \{0\}$,
with $z_1, \dots, z_l \in \Gamma$ and no subsum on the left hand side vanishing, one may divide both sides above by $m$, and employ the fact that $m^{-1}z_1,\dots,m^{-1}z_l \in m^{-1} \cdot \Gamma$, where we see that $m^{-1} \cdot \Gamma$ itself is contained in a multiplicative subgroup of $\mathbb{C}^{\times}$ of rank $r+1$. Thus one may bound solutions to more general linear equations without vanishing subsums at the cost of increasing the aforementioned upper bound by a factor of $(8l)^{4l^5}$.

The other major result we will use in our proof of Theorem \ref{main} will be Freiman's theorem. Thus, given some abelian group $G$ and some $k \in \mathbb{N}$,  we denote a finite set $P \subseteq G$ to be a \emph{$k$-dimensional centred convex progression} if there exists some symmetric convex body $Q \subseteq \mathbb{R}^k$ and a homomorphism $\varphi : \mathbb{Z}^k \to G$ such that $\varphi(\mathbb{Z}^k \cap Q) = P$. Moreover, we define a set $X \subseteq G$ to be a \emph{$k$-dimensional centred convex coset progression} if $X = P +H$, where $P$ is a $k$-dimensional centred convex progression and $H$ is a subgroup of G. It is known that such sets have a small doubling, that is, 
\[ |X + X| \leq e^{O(k)} |X|, \]
see \cite[Lemma $4.2$]{Sa2013}. Freiman's theorem for general abelian groups $G$ implies that any finite set $A \subseteq G$, with $|A+A|\leq K |A|$, can be covered by $O_K(1)$ translates of such a progression $X$, with $|X| \ll_K |A|$. There has been a rich history of work towards results of this type, see \cite{Ch2002, GR2007, Sa2013} and the references therein, and for our purposes, we will use the following quantitative version of Freiman's theorem from \cite{Sa2013}.

\begin{lemma} \label{freiman}
Let $A \subseteq G$ be a finite set with $|A+A| \leq K|A|$,  for some $K \geq 1$.  Then there exists some $l \in \mathbb{N}$ with $l \ll (\log 2K)^{3 +o(1)}$, a set $T \subseteq G$ with $|T| \leq e^l$, and a $l$-dimensional centred convex coset progression $X$ with $|X| \leq e^l |A|$ such that $A \subseteq T + X$. 
\end{lemma}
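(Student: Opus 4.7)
The plan is to follow the two-stage strategy that culminates in Sanders' theorem. The first stage, which is the deep one, is a quantitative Bogolyubov--Ruzsa lemma: inside $2A - 2A$ one locates a centred convex coset progression $P_0$ of dimension $l \ll (\log 2K)^{3+o(1)}$ with $|P_0| \geq e^{-O(l)} |A|$. The second stage uses a Ruzsa covering argument to convert this into a covering of $A$ itself by translates of a slightly larger progression.

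For the first stage I would apply the Croot--Sisask almost-periodicity lemma to $1_A$ (embedding $A$ in a suitable ambient finite abelian group if necessary), producing a symmetric set of $L^p$-near-periods of the convolution $1_A \ast 1_{-A}$ of size at least $\exp(-O(\log K \cdot \eta^{-2} \log \eta^{-1})) |A|$ for any approximation parameter $\eta \in (0,1)$. Iterating this near-periodicity, carefully tracking how approximate periods compose, and then applying a Chang-style Fourier argument to the resulting set of near-periods produces the target $P_0$. The numerology yielding the exponent $3+o(1)$ is precisely the content of Sanders' iteration; for the purposes of this paper I would simply quote the main theorem of \cite{Sa2013} rather than reprove it.

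For the second stage, Lemma \ref{pr} gives $|A + P_0| \leq |3A - 2A| \leq K^5 |A|$. Let $T \subseteq A$ be maximal subject to the translates $\{t + P_0 : t \in T\}$ being pairwise disjoint; then $|T| \cdot |P_0| \leq |A + P_0| \leq K^5 |A|$, so $|T| \leq K^5 e^{O(l)} \leq e^{O(l)}$. Maximality forces $A \subseteq T + (P_0 - P_0)$, and setting $X := P_0 - P_0$ yields a centred convex coset progression of the same dimension $l$ with $|X| \leq e^{O(l)} |P_0| \leq e^{O(l)} |A|$, using the fact that $|P - P| \leq e^{O(l)}|P|$ for any $l$-dimensional centred convex coset progression $P$. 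After redefining the implied constant in $l$ by a bounded factor, one arrives at the stated form.

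The main obstacle is unambiguously the first stage. Qualitatively weaker versions of this lemma (with dimension polynomial in $K$, or even the earlier record of $(\log 2K)^{4}$ due to Schoen) are considerably more accessible via Freiman's model lemma combined with classical Bogolyubov arguments. Pushing the dimension down to $(\log 2K)^{3+o(1)}$ requires Sanders' intricate iteration of Croot--Sisask with carefully optimised parameters at every stage, and any quick shortcut in that inner loop costs at least a polynomial factor in $\log K$. In contrast, the covering stage is a routine application of Ruzsa's classical argument.
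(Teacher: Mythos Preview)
Your proposal is correct and matches the paper's own treatment: the paper does not prove this lemma at all but simply quotes it from \cite{Sa2013}, exactly as you suggest doing at the end of your first-stage discussion. Your additional outline of Sanders' Croot--Sisask iteration and the Ruzsa covering wrap-up is accurate extra context, but the paper itself offers no such sketch.
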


We note that weaker versions of above type of inverse results, wherein one is only interested in bounding the  rank of the additive subgroup generated by some finite set $A$ with $|A+A| \leq K|A|$, can be deduced via more elementary methods. These type of ideas will perform an important role in our proof of Theorem \ref{env}. In particular, one such result will follow from the so-called Freiman's lemma \cite[Lemma 5.13]{TV2006}, and so, we record the latter below.

\begin{lemma} \label{frlem}
  Let $X \subseteq \mathbb{R}^d$ be a finite set which is not contained in a translate of some hyperplane. Then 
\[ |X+X| \geq (d+1)|X|  - d(d+1)/2. \]  
\end{lemma}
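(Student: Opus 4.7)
The plan is to prove Freiman's lemma by induction on the dimension $d$. The base case $d=1$ is the classical bound $|X+X| \geq 2|X|-1$ for finite subsets of $\mathbb{R}$, which one obtains by ordering $X = \{x_1 < x_2 < \cdots < x_n\}$ and exhibiting the strictly increasing chain $2x_1 < x_1+x_2 < \cdots < x_1+x_n < x_2+x_n < \cdots < 2x_n$ of $2n-1$ distinct elements of $X+X$; this matches $(d+1)|X| - d(d+1)/2$ when $d=1$.

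For the inductive step, I would exploit the convex geometry of $X$. Pick any facet $F$ of the convex polytope $\mathrm{conv}(X)$, and let $H$ be the affine hyperplane containing $F$. Set $Y = X \cap H$ and $Z = X \setminus H$. Since $F$ is $(d-1)$-dimensional, its vertices lie in $Y$ and affinely span $H$, so $Y$, viewed in $H \cong \mathbb{R}^{d-1}$, is not contained in any hyperplane of $H$. The inductive hypothesis then yields $|Y+Y|\geq d|Y|-d(d-1)/2$. Next, I would choose a linear functional $\psi\colon \mathbb{R}^d \to \mathbb{R}$ that equals a constant $c$ on $H$ and strictly exceeds $c$ on $Z$; this is possible because $H$ is a supporting hyperplane of $\mathrm{conv}(X)$ at $F$. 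Taking $\psi$ to be generic so that it separates the points of $X$, one stratifies $X+X$ by $\psi$-value: the set $Y+Y$ sits on $\{\psi=2c\}$, each shifted copy $Y+z$ (for $z \in Z$) sits on the hyperplane $\{\psi=c+\psi(z)\}$, and each sum $z+z'$ (for $z,z' \in Z$) sits on $\{\psi=\psi(z)+\psi(z')\}$. All the levels coming from $Z$ lie strictly above $\{\psi=2c\}$, so $Y+Y$ is disjoint from every sum involving $Z$; moreover, the sets $Y+z$ for distinct $z$ lie on distinct parallel hyperplanes, so they are pairwise disjoint, yielding the crisp equality $|Y+Z|=|Y||Z|$.

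Combining these contributions gives $|X+X| \geq |Y+Y|+|Y||Z|+\Delta$, where $\Delta$ measures the extra sums from $Z+Z$ that escape $(Y+Y)\cup(Y+Z)$. A direct calculation shows that, in order to recover the target bound $(d+1)|X|-d(d+1)/2$, one needs $\Delta \geq |Z|$. I would verify this by considering the diagonal sums $2z$ for $z \in Z$, which live on $|Z|$ distinct hyperplanes $\{\psi=2\psi(z)\}$, and showing that at least $|Z|$ such sums (or else enough off-diagonal sums from $Z+Z$) avoid the union $Y+Z$; the case $|Z|=1$ is immediate since $2z \neq y+z$ for any $y \in Y$, as $Y$ and $Z$ are disjoint. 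The main obstacle will be controlling the potential overlap between $Z+Z$ and $Y+Z$ on common $\psi$-levels, which is most delicate when $|Y|=d$ (the facet $F$ is a simplex and the inductive bound for $|Y+Y|$ is saturated). I expect this to be handled by a secondary induction on $|Z|$, or equivalently on $|X|$ at fixed $d$, combined with an explicit injection from $Z$ into $(Z+Z)\setminus(Y+Z)$ built using the diagonal sums $2z$ and, where those fail, off-diagonal sums exploiting the $(d-1)$-dimensional span of $Y$ inside $H$.
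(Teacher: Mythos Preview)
The paper does not prove this lemma; it is recorded as a known result and referenced to \cite[Lemma~5.13]{TV2006}, so there is no in-paper argument to compare against.

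Your overall plan --- induction on $d$ by peeling off a facet of $\mathrm{conv}(X)$ --- is the standard route, and the base case $d=1$ is correct. The inductive step, however, contains a genuine error. Having fixed the hyperplane $H$ that supports the chosen facet, you then propose to ``take $\psi$ to be generic so that it separates the points of $X$''. These two requirements are incompatible: an affine functional that is constant on $H$ is determined up to a positive scalar and an additive constant, so there is no freedom left to make it generic. In particular $\psi$ is identically $c$ on all of $Y$, and two distinct points $z,z'\in Z$ may well have $\psi(z)=\psi(z')$. Consequently your ``crisp equality'' $|Y+Z|=|Y|\,|Z|$ can fail --- take $d=2$, $Y=\{(0,0),(1,0)\}$, $Z=\{(0,1),(1,1)\}$, where $|Y+Z|=3<4$ --- and your claim that the diagonals $2z$ sit on $|Z|$ \emph{distinct} levels $\{\psi=2\psi(z)\}$ fails for the same reason. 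Since the rest of your count (the target $\Delta\ge |Z|$) is calibrated to these assertions, and since you explicitly leave the overlap analysis between $Z+Z$ and $Y+Z$ unfinished, the argument as written has a real gap rather than a routine omission.

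The clean way to organise the induction is to note that $X+X$ is the \emph{disjoint} union of $Y+Y$ (on the level $\psi=0$) and $X+Z=(Y+Z)\cup(Z+Z)$ (on levels $\psi>0$), so after the inductive bound on $|Y+Y|$ the problem reduces to the asymmetric estimate $|X+Z|\ge |X|+d(|Z|-1)$. One does \emph{not} try to force $|Y+Z|=|Y|\,|Z|$; instead one perturbs $\psi$ to $\psi+\varepsilon\ell$ for a generic $\ell$ and small $\varepsilon>0$ --- this makes the functional injective on $X$ while preserving the separation of $Y+Y$ from $X+Z$ --- and then runs a secondary induction removing the element of $X$ with largest value. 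This is essentially the argument in Tao--Vu, which you should consult for the details.
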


Now suppose that we have some non-empty, finite set $A \subseteq \mathbb{R}^{\times}$ with $|A| \geq 2$ satisfying $|A\cdot A| \leq K|A|$ for some $K \geq 1$. Let $A' = a^{-1}\cdot A$ for some $a \in A$ and let $\mathcal{V}$ be the multiplicative subgroup generated by $A$. Furthermore, suppose that $\mathcal{V}$ is isomorphic to $\mathbb{Z}^r$ for some $r \in \mathbb{N}$. Note that this condition implies that $|A'| \geq r$. We may now invoke Lemma \ref{frlem} to deduce that
\[ K|A'| \geq |A' \cdot A'| \geq (r+1)|A'|  - r(r+1)/2 \geq (r+1)|A'|/2,  \]
whenceforth, we have that $r \leq 2K$.  Thus, we see that $A$ is contained in $a\cdot \mathcal{V}$,  which,  in turn,  is contained in a multiplicative subgroup of rank at most $r+1 \leq 3K$.

We end this section by presenting Ruzsa's covering lemma as stated in \cite[Lemma 2.14]{TV2006}.

\begin{lemma} \label{rsco}
Let $A,B$ be finite subsets of some abelian group $G$ such that $|A+B| \leq K|A|$, for some $K \geq 1$. Then there exists a non-empty set $X \subseteq B$ with $|X| \leq K$ such that
\[ B \subseteq X + A -A. \]
\end{lemma}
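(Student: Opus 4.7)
The plan is to prove this via a standard greedy packing argument. I would let $X \subseteq B$ be a maximal subset with the property that the translates $\{x + A\}_{x \in X}$ are pairwise disjoint. Such a maximal $X$ exists by finiteness of $B$, and (taking $B$ to be non-empty, as is implicit from the conclusion) is itself non-empty, since one can initialise with any singleton $\{b\} \subseteq B$ and then extend greedily. The key insight is the usual packing/covering duality: a maximal disjoint packing automatically yields a covering by the difference set.

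For the cardinality bound, disjointness of the translates gives $|X + A| = |X| \cdot |A|$. Since $X \subseteq B$, one has $X + A \subseteq A + B$, and so
\[ |X| \cdot |A| = |X + A| \leq |A + B| \leq K |A|, \]
which yields $|X| \leq K$ after dividing through by $|A|$.

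For the covering, take any $b \in B$. If $b \in X$, then fixing any $a \in A$ gives $b = b + a - a \in X + (A - A)$. Otherwise, maximality of $X$ forces $X \cup \{b\}$ to fail the disjointness condition, so there exists $x \in X$ with $(b + A) \cap (x + A) \neq \emptyset$. Picking $a_1, a_2 \in A$ with $b + a_1 = x + a_2$ gives $b = x + (a_2 - a_1) \in X + A - A$, as desired. There is no real obstacle here — the argument is entirely elementary — and the only subtlety is remembering to handle the case $b \in X$ separately (which requires $A$ to be non-empty, a harmless assumption as the statement is vacuous otherwise).
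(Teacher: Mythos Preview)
Your proof is correct and is precisely the standard argument for Ruzsa's covering lemma. The paper itself does not supply a proof of this lemma but simply quotes it from \cite[Lemma 2.14]{TV2006}; your greedy packing argument is exactly the one given there, so there is nothing further to compare.
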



\section{Estimates for sets in multiplicative groups of small rank}

Our main goal of this section is to prove the following lemma, which will later combine with various Freiman type inverse theorems from \S3 to give us Theorems \ref{main} and \ref{env}.

\begin{lemma} \label{spt}
Let $r,n,d$ be natural numbers, let $\Gamma$ be a multiplicative subgroup of $\mathbb{C}^{\times}$ of rank $r$, let $A \subseteq \Gamma$ be a finite set and let $F \in \mathbb{C}[x_1, \dots, x_{n}]$ have $\deg F = d$ with $d(F) = n$ and $F(0,\dots,0) = 0$. Then there exists some set $\mathcal{G} \subseteq A^n$ with $|A^n \setminus \mathcal{G}| \ll_{d,n} |A|^{n-1}$ such that
\[  \sup_{m \in \mathbb{C}} | \{ (a_1, \dots, a_{n}) \in \mathcal{G}  :  F(a_1, \dots, a_{n}) = m \}| \ll_{d,n} 2^{O_{d,n}(r)}. \]

\end{lemma}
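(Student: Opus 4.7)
The plan is to view the equation $F(\vec{a}) = m$ as a linear equation over $\Gamma$ in the monomial values $\vec{a}^{\vec{i}}$ (for $\vec{i} \in \mathcal{I}_F$) and then apply Schmidt's subspace theorem via Lemma~\ref{subs}. Since that theorem requires no proper subsum to vanish, I would first discard a small set of tuples where such vanishing occurs; the hypothesis $d(F) = n$ will then allow us to lift a bound on monomial tuples back to a bound on $\vec{a}$ itself.

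Concretely, I would write $F(\vec{x}) = \sum_{\vec{i} \in \mathcal{I}_F} c_{\vec{i}} \vec{x}^{\vec{i}}$ with $l := |\mathcal{I}_F| = O_{d,n}(1)$ and each $c_{\vec{i}} \in \mathbb{C}^{\times}$. For each non-empty $S \subseteq \mathcal{I}_F$, the partial sum $F_S := \sum_{\vec{i} \in S} c_{\vec{i}} \vec{x}^{\vec{i}}$ is a non-zero polynomial of degree at most $d$. Setting
\[ \mathcal{B} := \bigcup_{\emptyset \neq S \subseteq \mathcal{I}_F} \{ \vec{a} \in A^n : F_S(\vec{a}) = 0 \}, \]
an application of Lemma~\ref{scz} to each $F_S$, combined with $l = O_{d,n}(1)$, yields $|\mathcal{B}| \ll_{d,n} |A|^{n-1}$. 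I would then take $\mathcal{G} := A^n \setminus \mathcal{B}$. The choice $S = \mathcal{I}_F$ already forces $F(\vec{a}) \neq 0$ for every $\vec{a} \in \mathcal{G}$, so the case $m = 0$ contributes nothing to the supremum.

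For fixed $m \neq 0$, the substitution $z_{\vec{i}} := \vec{a}^{\vec{i}} \in \Gamma$ converts $F(\vec{a}) = m$ into the linear equation $\sum_{\vec{i} \in \mathcal{I}_F} (c_{\vec{i}}/m) z_{\vec{i}} = 1$, and the no-proper-subsum-vanishing hypothesis of Lemma~\ref{subs} holds precisely because $\vec{a} \in \mathcal{G}$. Lemma~\ref{subs} then bounds the number of solution tuples $(z_{\vec{i}})_{\vec{i} \in \mathcal{I}_F}$ by $(8l)^{4l^4(l+lr+1)} = 2^{O_{d,n}(r)}$. To pass from monomial tuples back to $\vec{a}$, I would use $d(F) = n$ to select $\vec{j}_1, \dots, \vec{j}_n \in \mathcal{I}_F$ forming a $\mathbb{Q}$-basis of the span of $\mathcal{I}_F$. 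Let $M \in \mathbb{Z}^{n \times n}$ be the integer matrix with rows $\vec{j}_1, \dots, \vec{j}_n$; then the adjugate identity $M \cdot \operatorname{adj}(M) = (\det M)\, I$ expresses each $a_i^{\det M}$ as an explicit product of the known coordinates $z_{\vec{j}_1}, \dots, z_{\vec{j}_n}$, so every such monomial tuple has at most $|\det M|^n = O_{d,n}(1)$ preimages $\vec{a} \in (\mathbb{C}^{\times})^n$. Combining the two estimates gives the desired bound $2^{O_{d,n}(r)}$.

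The main obstacle I anticipate is arranging the no-vanishing-subsum hypothesis of Lemma~\ref{subs}, which is exactly the role of excluding $\mathcal{B}$. The Schwartz--Zippel bound is comfortably strong here because both the number of subsets $S$ and the degree of each $F_S$ are $O_{d,n}(1)$. The remaining step of inverting the monomial map is robust, relying only on elementary integer linear algebra and the fact that $|\det M|$ depends only on $d$ and $n$.
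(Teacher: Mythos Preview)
Your proposal is correct and follows essentially the same route as the paper: remove the zero-sets of all partial sums via Schwartz--Zippel, apply the subspace theorem (Lemma~\ref{subs}) to the resulting linear equation in the monomial values $\vec{a}^{\vec{i}}\in\Gamma$, and then use the hypothesis $d(F)=n$ to pass back from monomial tuples to $\vec{a}$. The only cosmetic differences are that the paper packages your adjugate argument as a separate Lemma~\ref{c2}, and that it handles the right-hand side $m$ by passing to a rank $r+1$ group (the discussion after Lemma~\ref{subs}) rather than, as you do, simply absorbing $m$ into the coefficients $c_{\vec{i}}/m$.
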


  We proceed by first proving an auxiliary lemma,  which in turn, requires us to present some new notation.  Thus,  given a basis $\mathcal{E} =\{ \vec{v}_1,\dots,\vec{v}_n\}$ of $\mathbb{Q}^n$, some vector $\vec{t} \in \mathbb{C}^n$ and some finite set $A \subseteq \mathbb{C}$,  we define
\[ \sigma(\vec{t},\mathcal{E},  A) = | \{ \vec{a} \in A^n : \vec{a}^{\vec{v}_i} = t_i \ \ (1 \leq i \leq n)      \}|,\]
where we write $\vec{a}^{\vec{u}} = a_1^{u_1}\dots a_n^{u_n}$ for every $\vec{a} \in A^n$ and for every $\vec{u} \in \mathbb{Q}^n$.
\begin{lemma} \label{c2}
Let $\kappa>0$ be a real number, let $\vec{t} \in (\mathbb{C}^{\times})^n$ be a vector, let $\mathcal{Z} \subseteq \mathbb{Q}^n$ be a basis of $\mathbb{R}^n$ such that $N(\vec{z}) \leq \kappa$ for every $\vec{z} \in \mathcal{Z}$ and let $A \subseteq \mathbb{C}^{\times}$ be a finite set. Then
\[ \sigma(\vec{t},\mathcal{Z},A) \ll_{\kappa ,n} 1.  \]
\end{lemma}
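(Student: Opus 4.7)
The plan is to clear denominators in the exponent vectors to reduce the equations $\vec{a}^{\vec{z}_i}=t_i$ to a system with integer exponents, then exploit the invertibility of the resulting exponent matrix to show that any two solutions differ by a tuple of roots of unity whose order is bounded in terms of $\kappa$ and $n$.

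Write $\mathcal{Z}=\{\vec{z}_1,\dots,\vec{z}_n\}$. Since every nonzero entry of every $\vec{z}_i$ is a reduced fraction with denominator at most $\kappa$, I can pick a positive integer $M=\operatorname{lcm}(1,2,\dots,\lfloor\kappa\rfloor)\ll_{\kappa}1$ such that $M\vec{z}_i\in\mathbb{Z}^n$ for every $i$. Let $U$ be the $n\times n$ integer matrix whose rows are $M\vec{z}_1,\dots,M\vec{z}_n$. Because $\mathcal{Z}$ is a basis of $\mathbb{R}^n$, the matrix $U$ is non-singular, and by Hadamard's inequality (or the Leibniz expansion), $|\det U|\leq n!(M\kappa)^n\ll_{\kappa,n}1$.

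Now raise each equation $\vec{a}^{\vec{z}_i}=t_i$ to the $M$-th power to obtain $\vec{a}^{M\vec{z}_i}=t_i^M$, a constraint in which all exponents are integers; every $\vec{a}\in A^n$ counted by $\sigma(\vec{t},\mathcal{Z},A)$ certainly satisfies this integer system, so bounding the number of solutions of the latter suffices. Suppose $\vec{a},\vec{a}'\in(\mathbb{C}^{\times})^n$ both satisfy this system, and set $\vec{b}=(a_1/a_1',\dots,a_n/a_n')$, so that $\vec{b}^{M\vec{z}_i}=1$ for every $i$. Taking $\mathbb{Z}$-linear combinations of the rows of $U$ through the adjugate identity $U\cdot\operatorname{adj}(U)=\det(U)\cdot I$, the multiplicative relation $\vec{b}^{M\vec{z}_i}=1$ yields $b_j^{|\det U|}=1$ for each $j$. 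Hence each $b_j$ is a $|\det U|$-th root of unity, leaving at most $|\det U|^n$ candidates for $\vec{b}$, and therefore at most that many $\vec{a}$ lying above any fixed $\vec{a}'$. If no solution $\vec{a}'$ exists the bound is trivial; in either case,
\[
\sigma(\vec{t},\mathcal{Z},A)\leq |\det U|^n\ll_{\kappa,n}1,
\]
which is the desired conclusion.

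The only delicate step is the multiplicative manipulation passing from the relations $\vec{b}^{M\vec{z}_i}=1$ to the integer-order conditions $b_j^{|\det U|}=1$; it is crucial here that $\mathcal{Z}$ spans $\mathbb{R}^n$ (equivalently $\mathbb{Q}^n$), so that $U$ is invertible over $\mathbb{Q}$ and the adjugate argument places $|\det U|\cdot\vec{e}_j$ in the $\mathbb{Z}$-row-span of $U$. Everything else is routine bookkeeping, and the hypothesis $\vec{t}\in(\mathbb{C}^{\times})^n$ together with $A\subseteq\mathbb{C}^{\times}$ ensures all the quantities above are well-defined nonzero complex numbers.
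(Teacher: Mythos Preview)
Your proof is correct and follows essentially the same approach as the paper: both invert the exponent matrix to trap each coordinate $a_j$ (or in your case the ratio $b_j=a_j/a_j'$) as a root of a univariate equation of degree $O_{\kappa,n}(1)$. The paper works with the rational inverse of the matrix with rows $\vec{z}_i$ to write $a_i=(t_1^{m_{i,1}}\cdots t_n^{m_{i,n}})^{1/Q_i}$ directly, while you clear denominators first and use the adjugate of the resulting integer matrix to force $b_j^{|\det U|}=1$; these are cosmetic variants of the same linear-algebra step.
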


\begin{proof}
We begin by considering the matrix $M \in \textrm{GL}_n(\mathbb{Q})$ whose row vectors are $\vec{z}_1, \dots, \vec{z}_n$ in that order.  Now,  since $\mathcal{Z}$ is a basis of $\mathbb{Q}^n$,  for each $1 \leq i \leq n$,  there exist $\alpha_{i,1}, \dots, \alpha_{i,n} \in \mathbb{Q}$ satisfying
\begin{equation} \label{app}
\vec{e}_i = \sum_{j=1}^{n} \alpha_{i,j} \vec{z}_j ,
\end{equation}
where $\vec{e}_1,\dots, \vec{e}_n$ form the canonical basis of $\mathbb{R}^n$. In fact,  $\{\alpha_{i,j}\}_{1\leq i,j \leq n}$ are precisely the entries of $M^{-1} \in \textrm{GL}_n(\mathbb{Q})$,  whence,  $N(\alpha_{i,j}) \ll_{\kappa,n} 1$ for every $1\leq i,j\leq n$.  
Noting $\eqref{app}$, we see that if $\vec{a} \in A^n$ satisfies $\vec{a}^{\vec{z}_i} = t_i$ for every $1 \leq i \leq n$,  then 
\[ a_i = \vec{a}^{\vec{e}_i} = \vec{a}^{\sum_{j=1}^{n} \alpha_{i,j} \vec{z}_j } = t_1^{\alpha_{i,1}} \dots t_n^{\alpha_{i,n}}  \ \ \ ( 1 \leq i \leq n). \]
Since $N(\alpha_{i,j}) \ll_{\kappa} 1$,  we may write $a_i = (t_1^{m_{i,1}} \dots t_n^{m_{i,n}})^{1/Q_i}$,  where $m_{i,1}, \dots, m_{i,n}, Q_i$ are some integers satisfying $|m_{i,1}|, \dots, |m_{i,n}| \ll_{\kappa, n} 1$ and $0<Q_i \ll_{\kappa,n} 1$. Thus,  $a_i$ is a root of some polynomial with fixed coefficients and degree $1 \leq Q_i \ll_{\kappa, n}1$,  and so,  for each $1 \leq i \leq n$,  there are at most $O_{\kappa,n}(1)$ possibilities for $a_i$.  This implies that there are $O_{\kappa,n}(1)$ values of $\vec{a} \in A^n$ such that $\vec{a}^{\vec{z}_i} =t_i$ for every $1\leq i \leq n$, thus concluding the proof of Lemma $\ref{c2}$.
\end{proof}

With this lemma in hand,  we now present our proof of Lemma \ref{spt}.

\begin{proof}[Proof of Lemma \ref{spt}]
We note that the case when $n=1$ follows trivially,  whence,  we may assume that $n \geq 2$. Next,  we note that since $d(F) = n$,  there exists a set $\mathcal{E} \subseteq \mathcal{I}_F$ such that $\mathcal{E}$ is a basis of $\mathbb{Q}^n$.  We fix one such set $\mathcal{E}$.  We now consider a non-empty set $\mathcal{I}' \subseteq \mathcal{I}_F$ and analyse the polynomial $F_{\mathcal{I}'} \in \mathbb{C}[x_1, \dots, x_n]$ which is given by
\[ F_{\mathcal{I}'}(\vec{x}) = \sum_{\vec{v} \in \mathcal{I}'} c_{\vec{v}} \vec{x}^{\vec{v}} .\]
 Since $\mathcal{I}'$ is non-empty,  we see that $F_{\mathcal{I}'}$ is a non-zero polynomial, and so, we can utilise Lemma $\ref{scz}$ to infer that
\[ |V(F_{\mathcal{I}'},A)| \leq d |A|^{n-1}. \]
Defining 
\[ V = \cup_{ \emptyset \neq \mathcal{I}' \subseteq \mathcal{I}_F} V(F_{\mathcal{I}'},A) \ \text{and} \ \mathcal{G} = A^n \setminus V,\]
the preceding inequality gives us
\[ |A|^n - |\mathcal{G}| = |V| \leq \sum_{\mathcal{I}' \subseteq \mathcal{I}_F} |V(F_{\mathcal{I}'},A)|  \ll_{d,n}|A|^{n-1}.  \]
We further denote 
\begin{equation} \label{repdef}
  r_{F, \mathcal{G}}(m)  = |\{ \vec{a} \in \mathcal{G} : F(\vec{a}) = m \}|
\end{equation}
for every $m \in \mathbb{C}$.  Since $V(F, A) \cap \mathcal{G} = \emptyset$,  we see that $r_{F,\mathcal{G}}(0) = 0$, and so, our main aim now is to show that 
\begin{equation} \label{sd3}
 \sup_{m \in \mathbb{C}^{\times}} r_{F,\mathcal{G}}(m) \ll_{d,n} 2^{O_{d,n}(r)} .
 \end{equation}
\par

Thus,  given $m \in \mathbb{C}^{\times}$,  suppose that
\[ m = \sum_{\vec{v} \in \mathcal{I}_{F}} c_{\vec{v}} \vec{a}^{\vec{v}}  \]
for some $\vec{a} \in \mathcal{G}$.  We will proceed to show that there are at most $O_{d,n}(2^{O_{d,n}(r)})$ possible such choices of $\vec{a} \in \mathcal{G}$.  In particular,  as $\vec{a} \in \mathcal{G}$,  we can not have any vanishing subsums of the form $\sum_{\vec{v} \in \mathcal{I}} c_{\vec{v}} \vec{a}^{\vec{v}} = 0$ for any $\mathcal{I} \subseteq \mathcal{I}_F$, since otherwise, we would have that $\vec{a} \in V(F_{\mathcal{I}},A)$ for some $\mathcal{I}  \subseteq \mathcal{I}_F$.  Next, since $\deg(F) = d$,  we see that for any $\vec{v} \in \mathcal{I}_F$ and for any $\vec{a} \in \mathcal{G}$,  the element $\vec{a}^{\vec{v}} \in A^{(i)}$ for some $1\leq i \leq d$.  From the hypothesis of Lemma \ref{spt}, we see that  the sets $A,A^{(2)}, \dots, A^{(d)}$ are contained in $\Gamma$.  Thus,  our strategy now is to bound, more generally, the number of solutions to equations of the form $m = \sum_{\vec{v} \in \mathcal{I}_G} c_{\vec{v}} z_{\vec{v}}$,  where each $z_{\vec{v}} \in A^{(i)}$ for some $1 \leq i \leq d$, as well as where there are no vanishing subsums.  Moreover,  for each such solution,  we will then bound the number of possibilities of $\vec{a} \in \mathcal{G}$ such that $\vec{a}^{\vec{v}} = z_{\vec{v}}$ for every $\vec{v} \in \mathcal{E}$. Thus,  writing 
\[ Y = A \cup A^{(2)} \cup \dots \cup A^{(d)}, \]
 we see that
\begin{equation} \label{jar}
 \sum_{\vec{a} \in \mathcal{G}} \mathds{1}_{m = \sum_{\vec{v} \in \mathcal{I}_{F}} c_{\vec{v}} \vec{a}^{\vec{v}} }   
 \leq    
 \sum_{\vec{v} \in \mathcal{I}_{F}} \sum_{z_{\vec{v}} \in Y}{}^{*}   \mathds{1}_{m = \sum_{\vec{v} \in \mathcal{I}_{F}} c_{\vec{v}} z_{\vec{v}}} \  
 \sigma( (z_{\vec{v}} )_{\vec{v}\in \mathcal{E}},  \mathcal{E},   A)   ,
 \end{equation}
where $ \sum_{\vec{v} \in \mathcal{I}_{F}} \sum_{z_{\vec{v}} \in Y}{}^{*}$ means that we are only considering $\{z_{\vec{v}}\}_{\vec{v} \in \mathcal{I}_F}$ which have no vanishing subsums of the form $\sum_{\vec{v} \in \mathcal{I} \subseteq \mathcal{I}_F} c_{\vec{v}} z_{\vec{v}} = 0$,  for any $\emptyset \neq \mathcal{I} \subseteq \mathcal{I}_F$.  Since $N(\vec{v}) \leq d$ for every $\vec{v} \in \mathcal{E}$,  we may combine Lemma $\ref{c2}$ along with $\eqref{jar}$ to deduce that
\[ \sum_{\vec{a} \in \mathcal{G}} \mathds{1}_{m = \sum_{\vec{v} \in \mathcal{I}_{F}} c_{\vec{v}} \vec{a}^{\vec{v}} }   \ll_{n,d}      \sum_{\vec{v} \in \mathcal{I}_{F}} \sum_{z_{\vec{v}} \in Y}{}^{*}   \mathds{1}_{m = \sum_{\vec{v} \in \mathcal{I}_{F}} c_{\vec{v}} z_{\vec{v}}} .\] 
Furthermore,  since $m \neq 0$ and since $Y$ lies in the multiplicative subgroup $\Gamma$ of $\mathbb{C}^{\times}$ with rank $r$,  we may now apply Lemma $\ref{subs}$, or more specifically, the discussion following Lemma $\ref{subs}$, to obtain the  estimate
\[ \sum_{\vec{a} \in \mathcal{G}} \mathds{1}_{m = \sum_{\vec{v} \in \mathcal{I}_{F}} c_{\vec{v}} \vec{a}^{\vec{v}} }   \ll_{d,n}  2^{O_{d,n}(r)} .  \]
This finishes the proof of Lemma \ref{spt}.
\end{proof}


\section{Proofs of Theorems \ref{main} and \ref{env}}

In this section, we will put forth the proofs of Theorems \ref{main} and \ref{env}. First, we will prove Theorem \ref{main} via an amalgamation of Lemmata \ref{freiman} and \ref{spt}.

\begin{proof}[Proof of Theorem \ref{main}]
We begin by noting that $F$ is non-degenerate, and so, Proposition \ref{rez} implies that $d(F) = n$. Moreover, we may assume that $F(0,\dots,0) = 0$, since that simply amounts to translating the set $F(A,\dots,A)$, which does not affect its cardinality.  Next,  if $|A| = 1$,  then the desired conclusion is trivial,  whence,  we may assume that $|A| \geq 2$.  This allows us to assume that $A \subseteq \mathbb{C}^{\times}$,  by passing,  if necessary,  to a subset $A' \subseteq A \cap \mathbb{C}^{\times}$ with $|A'| \geq |A|/2$.  Now, since $|A \cdot A| \leq K|A|$, we apply Lemma \ref{freiman} for the group $G = \mathbb{C}^{\times}$ to obtain some positive integer $l \ll (\log 2K)^{3 +o(1)}$ and a multiplicative $l$-dimensional centred convex coset progression $X$ with $|X| \leq e^l |A|$ such that $A$ is contained in a union of at most $e^l$ translates of $X$. Thus, there exists some $y \in \mathbb{C}^{\times}$ such that
\begin{equation} \label{smo}
|A \cap y \cdot X| \geq |A|/e^l.
\end{equation}
Let $A_1  = A \cap y\cdot X$. From the definition of $X$, we have that $X = P \cdot H$, where $H$ is a finite subgroup of $\mathbb{C}^{\times}$ and $P$ is a multiplicative $l$-dimensional centred convex progression in $\mathbb{C}^{\times}$. Thus, $P$ is contained in an multiplicative subgroup of $\mathbb{C}^{\times}$ of rank at most $l$. Moreover, since $H$ is a finite subgroup, we see that the set $y\cdot X = y\cdot P\cdot H$ is contained in a multiplicative subgroup of rank at most $l+1$. This implies that the set $A_1$ is contained in some multiplicative subgroup $\Gamma \subseteq \mathbb{C}^{\times}$ of rank at most $l+1 \ll (\log 2K)^{3 +o(1)}$. 

We now apply Lemma \ref{spt} to procure some $\mathcal{G}\subseteq A_1^n$ such that $|A_1^n \setminus \mathcal{G}| \ll_{d,n} |A_1|^{n-1}$ and
\[ \sup_{m \in \mathbb{C}} r_{F, \mathcal{G}}(m) \ll_{d,n} 2^{O_{d,n}((\log 2K)^{3 +o(1)}) } , \]
where the representation function $r_{F, \mathcal{G}}$ is defined as in \eqref{repdef}.  On the other hand, one may apply double counting to note that
\[ \sum_{m \in F(A_1,\dots,A_1)}  r_{F, \mathcal{G}}(m)  = |\mathcal{G} | \gg |A_1|^n, \]
which, in turn, combines with the preceding bound to give us
\[ |F(A_1, \dots, A_1)| \gg \frac{|A_1|^n}{  \sup_{m \in \mathbb{C}} r_{F, \mathcal{G}}(m)} \gg_{d,n}  \frac{|A_1|^n}{    2^{O_{d,n}((\log 2K)^{3 +o(1)}) }} . \]
Noting $\eqref{smo}$ along with the fact that $F(A,\dots, A) \supseteq F(A_1,\dots,A_1)$, we get that
\[  |F(A,\dots,A)| \geq |F(A_1, \dots, A_1)| \gg_{d,n}  \frac{|A|^n}{    2^{O_{d,n}((\log 2K)^{3 +o(1)}) }} ,  \]
which is the claimed result.
\end{proof}

Our next aim is to prove Theorem \ref{env}, and we begin this by first furnishing the following lemma about higher dimensional sumsets in $\mathbb{Z}/N\mathbb{Z} \times \mathbb{Z}^d$.

\begin{lemma} \label{sumset}
    Let $d, N$ be natural numbers, let $X$ be a finite subset of $G = \mathbb{Z}/N\mathbb{Z}\times \mathbb{Z}^d$ with $0 \in X$ such that $X$ generates $G$ and $|X+X| \leq K|X|$, for some $K \geq 1$. Then $d \leq 16K$.
\end{lemma}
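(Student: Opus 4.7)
The plan is to reduce to an application of Freiman's lemma (Lemma \ref{frlem}) by lifting $X$ from $G$ to a subset of a Euclidean lattice. First I would handle the case $N = 1$ separately: here $G = \mathbb{Z}^d$, and since $X$ generates $\mathbb{Z}^d$ (and $0 \in X$), the set $X$ cannot be contained in any affine hyperplane of $\mathbb{R}^d$. Lemma \ref{frlem} then yields $|X+X| \geq (d+1)|X| - d(d+1)/2$, which combined with $|X+X| \leq K|X|$ and $|X| \geq d+1$ (the minimum number of generators of $\mathbb{Z}^d$, plus the element $0$) gives $d \ll K$.

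Assume henceforth that $N \geq 2$. I would then lift $X$ to $\tilde X \subseteq \mathbb{Z}^{d+1}$ by choosing, for each element, the unique representative with first coordinate in $\{0, 1, \dots, N-1\}$, so that $|\tilde X| = |X|$ and $0 \in \tilde X$. The key structural claim is that $\tilde X$ is not contained in any linear hyperplane of $\mathbb{R}^{d+1}$. Suppose for contradiction that $\tilde X \subseteq \ker \alpha$ for some primitive $\alpha = (\alpha_0, \alpha_1, \dots, \alpha_d) \in \mathbb{Z}^{d+1} \setminus \{0\}$. If $\alpha_0 = 0$, then the projection of $X$ to the $\mathbb{Z}^d$ factor lies in a proper sublattice, contradicting the assumption that $X$ generates $G$. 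If $\alpha_0 \neq 0$, then applying $\alpha$ to both sides of the identity $\langle \tilde X \rangle + \mathbb{Z} \cdot (N, 0, \dots, 0) = \mathbb{Z}^{d+1}$ (which is equivalent to $X$ generating $G$) produces $N |\alpha_0| \mathbb{Z} = \mathbb{Z}$, forcing $N = 1$ and contradicting $N \geq 2$.

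With the hyperplane-avoidance claim secured, I would apply Lemma \ref{frlem} to $\tilde X \subseteq \mathbb{R}^{d+1}$ to obtain $|\tilde X + \tilde X| \geq (d+2)|\tilde X| - (d+1)(d+2)/2$. To relate this back to $|X+X|$, note that $\tilde X + \tilde X \subseteq \{0, 1, \dots, 2N-2\} \times \mathbb{Z}^d$, so each element of $X+X$ has at most two preimages in $\tilde X + \tilde X$ (differing by $(N, 0, \dots, 0)$ in the first coordinate); hence $|\tilde X + \tilde X| \leq 2|X+X| \leq 2K|X|$. Combining this with the Freiman lower bound and the estimate $|X| \geq d+2$ (since $G$ needs at least $d+1$ generators when $N \geq 2$, plus the element $0$) gives $d + 2 - (d+1)/2 \leq 2K$, i.e., $d \leq 4K - 3$, comfortably implying the claimed $d \leq 16K$.

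The main obstacle will be proving the hyperplane-avoidance claim for $\tilde X$, which requires carefully exploiting both that $X$ generates $G$ and that the torsion modulus $N$ is at least $2$. Once this is in place, the remainder is a straightforward pairing of Freiman's lemma in $\mathbb{R}^{d+1}$ with the elementary $2$-to-$1$ reduction bound relating the two sumsets.
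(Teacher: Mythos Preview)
Your proof is correct and takes a genuinely different, somewhat cleaner route than the paper's. The paper partitions $\mathbb{Z}/N\mathbb{Z}$ into four arcs, selects a dense slice $X_j \subseteq X$ with $|X_j|\geq |X|/4$, and lifts that slice to $\mathbb{Z}^{d+1}$ via a Freiman $2$-isomorphism. Crucially, the paper does \emph{not} argue that this lift is full-dimensional; instead it lets $r$ denote the affine dimension of the lifted slice and splits into the cases $r\geq d/2$ (where Lemma~\ref{frlem} applied to the slice already forces $d\leq 16K$) and $r<d/2$ (where one finds $d-r$ independent translates of $X_j$ inside $X$, giving $|X+X|\geq (d-r)|X_j|>d|X|/8$). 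Your argument avoids this dichotomy entirely: you lift all of $X$ at once, use the generating hypothesis together with $N\geq 2$ to show the lift $\tilde X$ is genuinely $(d+1)$-dimensional, and then pay only a factor of $2$ (rather than $4$) when comparing $|\tilde X+\tilde X|$ with $|X+X|$. This yields the sharper bound $d\leq 4K-3$ in place of $d\leq 16K$, and the hyperplane-avoidance claim is where the generating hypothesis is exploited most transparently. The paper's approach, by contrast, never needs to verify full-dimensionality of any lift, trading that verification for a case split and a worse constant.
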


\begin{proof}
Given $1 \leq j \leq 4$, we define $I_j \subseteq \mathbb{Z}/N\mathbb{Z}$ to be the set satisfying
\[ I_j = \{0,1,2,\dots, N -1 \} \cap [(j-1)N/4, jN/4) \ \  (\mod N) , \]
and we denote $X_j = X \cap (I_j \times \mathbb{Z}^d)$. Note that $|X| = |X_1| + \dots + |X_4|$, whence, there exists some $1 \leq j \leq 4$ such that $|X_j| \geq |X|/4$. We now construct a Freiman $2$-isomorphism $\vartheta:X_j \to \mathbb{Z}^{d+1}$ and we consider the set $Y = \vartheta(X_j)$. Denoting $r$ to be the dimension of the smallest affine subspace of $\mathbb{Z}^{d+1}$ containing $Y$, we see that $1 \leq r \leq d+1$ as well as $|Y| \geq r$.  We may now apply Lemma \ref{frlem} to deduce that
\begin{equation} \label{zdf}
    |Y+Y| \geq (r+1)|Y| - r(r+1)/2.  
\end{equation} 

We divide our proof into two cases, the first being when $r \geq d/2$. In this case, we note that
\[ |Y + Y| = |X_j + X_j| \leq |X+X| \leq K|X|, \]
which then combines with \eqref{zdf} to dispense the estimate
\begin{align*} 2K|X| & \geq 2(r+1)( |Y| - r/2) \geq 2(r+1) |Y|/2 \\
& \geq (d+2)|Y|/2 \geq (d+2)|X|/8. 
\end{align*}
This gives us that $d \leq 16K,$ which is the desired bound. Our second case is when $r < d/2$. In this case, note that there exist $d-r$ linearly independent elements $z_1, \dots, z_{d-r} \in X$ which lie outside the affine span of $X_j$.  Thus we have that
\[ K|X| \geq |X+X| \geq \sum_{i=1}^{d-r} |X_j + z_i| = (d-r)|X_j| > d|X|/8 , \]
which supplies the claimed bound $d \leq 16K$.
\end{proof}

With this lemma in hand, we now present our proof of Theorem \ref{env}.

\begin{proof}[Proof of Theorem \ref{env}]
Note that we may assume that $F(0,\dots,0) = 0$ simply by replacing the polynomial $F(\vec{x})$ with the polynomial $F(\vec{x}) - F(0,\dots, 0)$. Moreover, applying Proposition \ref{rez}, we see that since $F$ is non-degenerate, we must have $d(F) = n$.  Now, let $A\subseteq \mathbb{C}^{\times}$ be a finite set with $|A\cdot A|\leq K|A|$. We may assume that $1 \in A$ since we can replace $A$ by $A' = a^{-1}\cdot A$ for some $a \in A$ and note that
\[ F(A, \dots, A) = F'(A' ,\dots, A') ,\]
where $F' \in \mathbb{C}[x_1,\dots,x_n]$ satisfies $F'(x_1, \dots, x_n) = F(a x_1, \dots, a x_n)$. Here, it is important to note that $F'(0,\dots,0) = 0$ and $d(F') = n$ and $|A'\cdot A'| \leq K|A'|$. Now, let $\mathcal{V}$ be the multiplicative subgroup generated by $A$. Note that $\mathcal{V}$ is isomorphic to $\mathbb{Z}^r \times H$, for some $r \in \mathbb{N}\cup\{0\}$ and for some finite multiplicative subgroup $H$ of $\mathbb{C}^{\times}$. Since all the finite multiplicative subgroups of $\mathbb{C}^{\times}$ are isomorphic to cyclic groups, we see that $\mathcal{V}$ is isomorphic to $\mathbb{Z}^r \times \mathbb{Z}/N\mathbb{Z}$ for some $N \in \mathbb{N}$. We now apply Lemma \ref{sumset} to deduce that $r \leq 16K$, that is, $A$ is contained in a multiplicative subgroup of $\mathbb{C}^{\times}$ of rank at most $16K$. We now apply Lemma \ref{spt} to procure the claimed inequality \eqref{mo}. Furthermore, \eqref{mo} supplies the second inequality stated in Theorem \ref{env} in a straightforward manner since
\begin{align*}
    |\{ (\vec{a},\vec{a}') \in \mathcal{G}\times\mathcal{G} : F(\vec{a}) = F(\vec{a}') \}|  
    & = \sum_{m \in \mathbb{C}} r_{F,\mathcal{G}}(m)^2 \\ 
    & \ll_{d,n} 2^{O_{d,n}(K)} \sum_{m \in \mathbb{C}} r_{F,\mathcal{G}}(m)  \\
    & \leq 2^{O_{d,n}(K)} |A|^n. \qedhere
\end{align*}  
\end{proof}


\section{Proof of Theorem \ref{condn}}

We utilise this section to present the connections between our main results and the well-known polynomial Freiman-Ruzsa conjecture. As mentioned before, the latter concerns obtaining quantitatively optimal dependence on the doubling $K$ in Freiman's theorem and is a central open problem in additive combinatorics. In particular, it essentially asserts that one can have $l \ll \log 2K$ in the statement of Lemma \ref{freiman}, see for instance \cite[Conjecture 1.5]{Sa2013}. This is a very deep problem with a variety of applications to topics in additive combinatorics and analytic number theory, see \cite[\S13]{Sa2013}, and in particular, it is known that such conjectures imply very strong results towards the sum-product problem \cite{Ch2009}. We will now present one of the weaker versions of this conjecture as presented in \cite{Ch2009}, see also \cite{Ma2015, GMT2023}.

\begin{Conjecture} \label{wkpf}
Let $V$ be a $\mathbb{Z}$-module, let $A \subseteq V$ be a finite set with $|A+A| \leq K|A|$ for some $K \geq 1$. Then there exists $A_1 \subseteq A$ with $|A_1| > |A|/K^c$ and elements $\xi_1, \dots, \xi_d \in V$ for some $d < c \log 2K$ such that
\[ A_1 \subseteq \mathbb{Z} \xi_1 + \dots + \mathbb{Z} \xi_d, \]
where $c>0$ is some absolute constant. 
\end{Conjecture}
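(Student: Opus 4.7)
The plan is to attack Conjecture \ref{wkpf} by first reducing to the torsion-free case and then applying refined Freiman-type inverse theorems. I would begin by observing that $A$ generates a finitely generated abelian subgroup $\langle A\rangle$ of $V$, which by the structure theorem decomposes as $\mathbb{Z}^s \oplus T$ for some $s \in \mathbb{N}\cup\{0\}$ and some finite torsion subgroup $T$. A pigeonhole argument on the cosets of $\mathbb{Z}^s$ in $\langle A\rangle$, combined with the Pl\"unnecke--Ruzsa inequality (Lemma \ref{pr}) to control the doubling of the resulting subset, would reduce the problem to the case where $V$ is torsion-free, losing only absolute constants in both the density parameter $|A_1|/|A|$ and the rank parameter $d$.

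In the torsion-free case, the natural starting point is to invoke Sanders' strengthening of Freiman's theorem (Lemma \ref{freiman}) to cover $A$ by $e^l$ translates of an $l$-dimensional centred convex coset progression with $l \ll (\log 2K)^{3+o(1)}$. A further pigeonhole step produces a subset $A_1 \subseteq A$ with $|A_1| \geq |A|/e^l$ lying inside a single translate of such a progression, and hence inside a $\mathbb{Z}$-submodule of rank $\ll (\log 2K)^{3+o(1)}$. This already yields an unconditional analogue of Conjecture \ref{wkpf} in which both the density $|A_1|/|A|$ and the rank $d$ are quasi-polynomial in $K$, rather than the polynomial density $K^{-c}$ and the linear rank $c \log 2K$ demanded by the conjecture; indeed, it is exactly this version that is used in the proof of Theorem \ref{main}.

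The main obstacle is therefore to improve the bounds in Sanders' theorem from $(\log 2K)^{3+o(1)}$ to $O(\log 2K)$. This improvement is essentially equivalent to the polynomial Freiman--Ruzsa conjecture itself, and so remains one of the deepest open problems in additive combinatorics. A promising avenue is to adapt the entropy-increment framework recently used to settle PFR over $\mathbb{F}_2^n$, replacing Bernoulli random variables on the discrete cube with suitable lattice-valued random variables and iterating a Ruzsa-distance decrement argument tailored to $\mathbb{Z}^s$; the technical difficulty is that characters on $\mathbb{Z}^s$ take values in a continuum, so the combinatorial fibering steps must be replaced by quantitative continuity estimates. Alternatively, one could pursue a Fourier-analytic route via Chang's lemma together with Croot--Sisask almost-periodicity, aiming to extract a rank-$O(\log 2K)$ generalised arithmetic progression inside $2A - 2A$ that correlates with a $K^{-O(1)}$-fraction of $A$. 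In either strategy, the hard part is avoiding the cubing losses that currently force Sanders' dimension bound to be polylogarithmic rather than logarithmic in $K$.
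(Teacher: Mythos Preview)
The statement you were asked to prove is labelled \emph{Conjecture} \ref{wkpf} in the paper, and the paper does not prove it; it is precisely the weak polynomial Freiman--Ruzsa conjecture, stated as an open hypothesis and then used conditionally in Proposition \ref{chs} and Theorem \ref{condn}. So there is no ``paper's own proof'' to compare against.

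That said, your proposal is accurate in its assessment. You correctly identify that Sanders' theorem (Lemma \ref{freiman}) gives an unconditional version with quasi-polynomial parameters $l \ll (\log 2K)^{3+o(1)}$, and that closing the gap to $l \ll \log 2K$ is essentially equivalent to the polynomial Freiman--Ruzsa conjecture itself. You also correctly observe that this weaker Sanders-level version is exactly what the paper uses to prove Theorem \ref{main}. The approaches you sketch in the final paragraph (entropy increment \`a la the $\mathbb{F}_2^n$ resolution, or Croot--Sisask/Chang-type Fourier arguments) are reasonable research directions, but as you yourself note, none of them currently yields a proof over $\mathbb{Z}$. In short: there is no gap in your analysis beyond the one you already flag, namely that the conjecture is open.
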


As in \cite{Ch2009}, we will now show that Conjecture \ref{wkpf} implies a much more stronger version of Theorem \ref{main} for sets of real numbers.

\begin{Proposition} \label{chs}
Suppose that Conjecture \ref{wkpf} holds true. Then for every $d,n \in \mathbb{N}$, for every $K \geq 1$ and  for every non-degenerate $F \in \mathbb{C}[x_1,\dots,x_n]$ with $\deg F = d$  and for every finite set $A \subseteq \mathbb{R}$ with $|A \cdot A| \leq K|A|$, one has
\[ |F(A, \dots, A)| \gg_{d,n} \frac{|A|^n}{(2K)^{O_{d,n}(1)}}.  \]
\end{Proposition}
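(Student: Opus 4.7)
The plan is to mimic the proof of Theorem \ref{main} essentially verbatim, substituting Conjecture \ref{wkpf} for Lemma \ref{freiman}. The point is that Sanders' theorem produces a coset progression of dimension $l \ll (\log 2K)^{3+o(1)}$, whereas Conjecture \ref{wkpf} produces a multiplicative subgroup of rank only $\ll \log 2K$. Fed through Lemma \ref{spt}, this upgrades the representation-function bound from $2^{O_{d,n}((\log 2K)^{3+o(1)})}$ to $2^{O_{d,n}(\log 2K)} \ll (2K)^{O_{d,n}(1)}$, which is exactly the source of the polynomial dependence on $K$.

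First I would perform the standard reductions from the beginning of the proof of Theorem \ref{main}. By Proposition \ref{rez}, non-degeneracy of $F$ gives $d(F) = n$; translating $F$ by a constant arranges $F(0,\dots,0) = 0$; and replacing $A$ by $A \cap \mathbb{R}^{\times}$ (discarding at most one element) arranges $A \subseteq \mathbb{R}^{\times}$. The doubling hypothesis survives each reduction up to harmless absolute constants, and the main claim is invariant under replacing $A$ by $a^{-1} \cdot A$ for some $a \in A$, so I may additionally assume $1 \in A$.

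Next, let $\mathcal{V} \subseteq \mathbb{R}^{\times}$ be the multiplicative subgroup generated by $A$, viewed as a $\mathbb{Z}$-module in additive notation, inside which $A$ satisfies $|A+A| \leq K|A|$. Applying Conjecture \ref{wkpf} produces a subset $A_1 \subseteq A$ with $|A_1| > |A|/K^c$ and elements $\xi_1, \dots, \xi_d \in \mathcal{V}$ with $d < c \log 2K$ such that $A_1 \subseteq \mathbb{Z}\xi_1 + \dots + \mathbb{Z}\xi_d$. Reverting to multiplicative notation, $A_1$ is contained in a multiplicative subgroup $\Gamma \subseteq \mathbb{C}^{\times}$ of rank at most $d \ll \log 2K$. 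I would then invoke Lemma \ref{spt} on $A_1$ inside $\Gamma$ to obtain a set $\mathcal{G} \subseteq A_1^n$ with $|A_1^n \setminus \mathcal{G}| \ll_{d,n} |A_1|^{n-1}$ and
\[ \sup_{m \in \mathbb{C}} r_{F,\mathcal{G}}(m) \ll_{d,n} 2^{O_{d,n}(\log 2K)} \ll_{d,n} (2K)^{O_{d,n}(1)}. \]

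The same double-counting step that concludes the proof of Theorem \ref{main} then yields
\[ |F(A_1,\dots,A_1)| \;\geq\; \frac{|\mathcal{G}|}{\sup_m r_{F,\mathcal{G}}(m)} \;\gg_{d,n}\; \frac{|A_1|^n}{(2K)^{O_{d,n}(1)}}, \]
provided $|A_1|$ is large enough in terms of $d,n$ that $|\mathcal{G}| \gg_{d,n} |A_1|^n$ (when this fails, the trivial lower bound $|F(A,\dots,A)| \gg_d |A|$ from \eqref{es4} already dominates the target). Since $|A_1|^n \geq |A|^n/K^{cn}$ and $F(A_1,\dots,A_1) \subseteq F(A,\dots,A)$, absorbing the factor $K^{cn}$ into $(2K)^{O_{d,n}(1)}$ gives the claimed estimate. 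The only substantive step beyond the proof of Theorem \ref{main} is checking that Conjecture \ref{wkpf}, applied multiplicatively, produces a subgroup with the rank bound required by Lemma \ref{spt}; everything else is routine bookkeeping on the exponent of $K$, and I anticipate no serious obstacle.
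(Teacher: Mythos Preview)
Your proposal is correct and follows essentially the same route as the paper's own proof: reduce via Proposition \ref{rez}, pass to a large subset lying in a low-rank multiplicative subgroup via Conjecture \ref{wkpf}, invoke Lemma \ref{spt}, and double-count. The paper additionally restricts first to $A \cap (0,\infty)$ (or $A \cap (-\infty,0)$) so that the ambient multiplicative group is torsion-free and isomorphic to some $\mathbb{Z}^r$, but your direct application of Conjecture \ref{wkpf} to the group generated by $A \subseteq \mathbb{R}^{\times}$ works equally well since the conjecture is stated for arbitrary $\mathbb{Z}$-modules; your aside about arranging $1 \in A$ is unnecessary (and slightly misstated, since one must simultaneously replace $F(\vec{x})$ by $F(a\vec{x})$ as in the proof of Theorem \ref{env}), but harmless.
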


\begin{proof}
As in the proof of Thereom \ref{main}, we may apply Proposition \ref{rez} to assume that $d(F) = n$ and further assume that $F(0, \dots, 0) = 0$. Now, since $A$ is a finite set of real numbers, we can write $A = A_1 \cup A_2 \cup A_3$, where $A_1 = A \cap (0, \infty)$ and $A_2 = A \cap (-\infty,0)$ and $A_3 = A \cap \{0\}$. We see that either $|A_1| \geq |A|/3$ or $|A_2| \geq |A|/3$. Suppose that $|A_1| \geq |A|/3$; the other case follows very similarly. Now, note that 
\[ 3K|A_1| \geq K|A| \geq |A\cdot A|   \geq |A_1 \cdot A_1|. \]
Moreover, since $A_1$ generates a multiplicative subgroup $V$ of $\mathbb{R}^{\times}$ which is isomorphic to $\mathbb{Z}^r$ for some $r \in \mathbb{N}$, we may now apply Conjecture \ref{wkpf} to deduce the existence of some $A_4 \subseteq A_1$ with $|A_4| \gg |A|/K^c$ such that the multiplicative subgroup $\Gamma$ generated by $A_4$ has rank at most $ c \log 2K$, for some absolute constant $c>0$. Applying Lemma \ref{spt} now, we may find some $\mathcal{G} \subseteq A_4^n$ with $|A_4^n \setminus \mathcal{G}| \ll_{d,n} |A_4|^{n-1}$ such that
\[ \sup_{m \in \mathbb{C}} r_{F, \mathcal{G}}(m) \ll_{d,n} 2^{O_{d,n}(\log 2K)} \ll (2K)^{O_{d,n}(1)}. \]
We conclude our proof by observing, as before, that
\begin{align*}
 |F(A,\dots, A)| 
 & \geq  |F(A_4,\dots, A_4)| \geq \frac{|\mathcal{G}|}{ \sup_{m \in \mathbb{C}} r_{F, \mathcal{G}}(m)}   \\
 & \gg_{d,n} \frac{|A_4|^n}{ (2K)^{O_{d,n}(1)}} \gg_{d,n} \frac{|A|^n}{ (2K)^{O_{d,n}(1)}} . \qedhere
 \end{align*}
\end{proof}

We end this section by presenting the proof of Theorem \ref{condn}.

\begin{proof}[Proof of Theorem \ref{condn}]
As in the proof of Proposition \ref{chs}, we may assume that $A \subseteq (0,\infty)$ by passing to a large subset and replacing $F(\vec{x})$ with $F(-\vec{x})$ if necessary. Moreover, we may assume that $F(0,\dots, 0) = 0$ and apply Proposition \ref{rez} to deduce that $d(F) = k$. Let $l \geq k$ be some positive integer that we will fix later and suppose that $|A^{(2^l)}| \leq |A|^k$. Then we have that
\[ \prod_{i=0}^{l-1} \frac{|A^{(2^{i+1})}|}{|A^{(2^{i})}|} = \frac{|A^{(2^l)}|}{|A|}  < |A|^k,\]
whereupon, there exists some $0 \leq i \leq l-1$ such that
\[ \frac{|A^{(2^{i+1})}|}{|A^{(2^{i})}|}  < |A|^{k/l} .  \]
Writing $B = A^{(2^i)}$, we see that $B \subseteq (0,\infty)$ is a finite set satisfying $|B\cdot B| \leq |A|^{k/l}|B|.$ As in the proof of Proposition \ref{chs}, we may now apply Conjecture \ref{wkpf} to deduce the existence of some $B' \subseteq B$ with $|B'| \gg |B|/|A|^{ck/l}$ such that the multiplicative subgroup $\Gamma$ generated by $B'$ has rank at most $ ck l^{-1} \log 2|A|$, for some absolute constant $c>0$. Moreover, note that 
\[  |B\cdot B'| \leq |B\cdot B| \leq |A|^{k/l}|B| \ll |A|^{(c+1)k/l}|B'|, \]
whence, we may apply Lemma \ref{rsco} to see that $B \subseteq T \cdot B' \cdot (B')^{-1}$ where $T \subseteq B$ is some non-empty set satisfying $|T| \ll |A|^{(c+1)k/l}$. On the other hand, note that $B$ contains $\lambda \cdot A$ for some $\lambda \neq 0$. Thus, for some $x \neq 0$, we have that
\[   |A \cap (x \cdot B' \cdot (B')^{-1})|  \geq |A|/|T| \gg |A|^{ 1- (c+1)k/l}. \]
We now observe that the set $x \cdot B' \cdot (B')^{-1}$ is contained in a multiplicative subgroup of rank at most $ck l^{-1} \log 2|A| + 1$, and so, upon writing $A' = A \cap (x \cdot B' \cdot (B')^{-1})$, we may apply a quantitatively precise version of Lemma \ref{spt} to deduce that
\[ |F(A',\dots,A')| \gg_{d,k} |A'|^{2k} 2^{-O_{d}(k^6 (ck l^{-1} \log 2|A| + 1)) } \gg_{d,k} |A|^{k - (2c+2)k^2/l  - O_d(ck^7/l )} . \]
Choosing $l = \ceil{\varepsilon^{-1}C_{d}k^8}$ for some large constant $C_d>0$, we may deduce that
\[ |F(A,\dots,A)| \geq |F(A',\dots,A')| \gg_{d,k} |A|^{k - \varepsilon}. \]
Putting everything together, we now see that either 
\[ |A^{(2^l)}| > |A|^k \ \text{or} \ |F(A,\dots,A)| \gg_{d,k} |A|^{k- \varepsilon}, \]
whence, setting $n=2^l$ delivers the desired conclusion.
\end{proof}


\bibliographystyle{amsbracket}
\providecommand{\bysame}{\leavevmode\hbox to3em{\hrulefill}\thinspace}

\end{document}